\newtheorem{theorem}{Theorem}[section]
\newtheorem{lemma}[theorem]{Lemma}
\newtheorem{proposition}[theorem]{Proposition}
\newtheorem{assumption}{Assumption}
\theoremstyle{definition}
\newtheorem{example}[theorem]{Example}
\numberwithin{equation}{section}
\renewcommand{\labelenumi}{\roman{enumi}}
\renewcommand\theenumi\labelenumi
\renewcommand{\leq}{\leqslant}
\renewcommand{\le}{\leqslant}
\renewcommand{\geq}{\geqslant}
\renewcommand{\ge}{\geqslant}
\newcommand{\R}{\mathbb{R}}
\newcommand{\E}{\mathbb{E}}
\newcommand{\dif}{\mathrm{d}}
\newcommand*\abs[1]{\left\lvert#1\right\rvert}
\title[Existence and non-existence of the CLT]{Existence and non-existence of the CLT for a family of SDEs driven by stable processes}
\author[Y. Mo]{Yingjun Mo}
\address[Y.~Mo]{1. Department of Mathematics, Faculty of Science and Technology, University of Macau, Macau, 999078, China; 2. Zhuhai UM Science, Technology Research Institute, Zhuhai, 519031, China}
\email{yc27477@um.edu.mo}
\author[Y. Wang]{Yu Wang}
\address[Y.~Wang]{1. Department of Mathematics, Faculty of Science and Technology, University of Macau, Macau, 999078, China; 2. Zhuhai UM Science, Technology Research Institute, Zhuhai, 519031, China}
\email{yc17447@um.edu.mo}
\begin{document}
	
\keywords{Central limit theorem, different test functions, existence and non-existence, SDEs with super-linear drift, $\alpha$-stable processes}
\subjclass[2010]{60F05; 60G51; 60G52.}

\begin{abstract}
Stochastic differential equations (SDEs) without global Lipschitz drift often demonstrate unusual phenomena. 
In this paper, we consider the following SDE on $\R^d$: 
 \begin{align*}
    \mathrm{d} \mathbf{X}_t=\mathbf{b}(\mathbf{X}_t) \mathrm{d} t+ \mathrm{d}\mathbf{Z}_t, \quad \mathbf{X}_0=\mathbf{x} \in \mathbb{R}^d,
\end{align*}
where $\mathbf{Z}_t$ is the rotationally symmetric $\alpha$-stable process with $\alpha \in(1,2)$ and $\mathbf{b}:\R^d \rightarrow \R^d$ is a differentiable function satisfying the following condition: there exist some 
$\theta \ge 0$, and $K_1 ,  K_2 ,  L>0$, so that
\begin{align*}
\langle \mathbf{b}(\mathbf{x})-\mathbf{b}(\mathbf{y}), \mathbf{x}-\mathbf{y}\rangle 
\leq
\left\{
	\begin{aligned}
		 \ \ K_1 |\mathbf{x}-\mathbf{y}|^2\ \ , \quad &\forall \ \ |\mathbf{x}-\mathbf{y}| \le L,  \\
		-K_2 |\mathbf{x}-\mathbf{y}|^{2+\theta}, \quad &\forall \ \ |\mathbf{x}-\mathbf{y}| > L.
	\end{aligned}
\right.
\end{align*}
Under this assumption, the SDE admits a unique invariant measure $\mu$. 

We investigate the normal central limit theorem (CLT)  of the empirical measures 
$$
\mathcal{E}_t^\mathbf{x}(\cdot)=\frac{1}{t} \int_0^t \delta_{\mathbf{X}_s }(\cdot) \mathrm{d} s, \ \ \ \ \mathbf{X}_0=\mathbf{x} \in \R^d, \ \ t>0,
$$ 
where $\delta_{\mathbf{x}}(\cdot)$ is the Dirac delta measure.

Our results reveal that, for the bounded measurable function $h$,
$$\sqrt t \left(\mathcal{E}_t^\mathbf{x}(h)-\mu(h)\right)=\frac{1}{\sqrt t} \int_0^t \left(h\left(\mathbf{X}_s^\mathbf{x}\right)-\mu(h)\right) \mathrm{d} s$$ 
admits a normal CLT for $\theta \ge 0$. For the Lipschitz continuous function $h$, the normal CLT does not necessarily hold when $\theta=0$, but it is satisfied for $\theta>1-\frac{\alpha}{2}$.  

\end{abstract}

\maketitle

\section{Introduction}
Stochastic differential equations (SDEs) without global Lipschitz drift often demonstrate unusual phenomena. For instance, \cite{hutzenthaler2011strong,MR3134726,wang2024phase,MR2051587,MR1949404} reported that the Euler-Maruyama (EM) scheme blows up, and \cite{wang2021large} reported a Donsker-Varadhan large deviation for a family of superlinear SDEs driven by $\alpha$-stable processes with $\alpha \in (1,2)$. In this paper, we demonstrate other unusual results which reveal the existence of the normal central limit theorem (CLT) for a family of superlinear SDEs.  

In this paper, we consider the following SDE on $\R^d$: 
 \begin{align}  \label{e:GSDE}
    \mathrm{d} \mathbf{X}_t=\mathbf{b}(\mathbf{X}_t) \mathrm{d} t+ \mathrm{d}\mathbf{Z}_t, \quad \mathbf{X}_0=\mathbf{x} \in \mathbb{R}^d,
\end{align}
where $\mathbf{Z}_t$ is the rotationally symmetric $\alpha$-stable process with $\alpha \in(1,2)$ under a probability base $\left(\Omega, \mathcal{F},\left\{\mathcal{F}_t\right\}_{t \geq 0}, \mathbb{P}\right)$ and the following assumption holds true:  
\begin{assumption} \label{a:Assumption}
$\mathbf{b}:\R^d \rightarrow \R^d$ is a differentiable function satisfying the following condition:
there exist some $\theta \ge 0$, and $K_1,K_2,L>0$, so that
\begin{align*}
\langle \mathbf{b}(\mathbf{x})-\mathbf{b}(\mathbf{y}), \mathbf{x}-\mathbf{y}\rangle 
\leq
\left\{
	\begin{aligned}
		 \ \ K_1 |\mathbf{x}-\mathbf{y}|^2\ \ , \quad &\forall \ \ |\mathbf{x}-\mathbf{y}| \le L,  \\
		-K_2 |\mathbf{x}-\mathbf{y}|^{2+\theta}, \quad &\forall \ \ |\mathbf{x}-\mathbf{y}| > L.
	\end{aligned}
\right.
\end{align*}
\end{assumption}
It is well known that there is a unique weak solution to SDE \eqref{e:GSDE} and the solution is non-explosive for $\theta\geq 0$, see \cite{chen2021supercritical,meyn1993stability,Luo2016RefinedBC} for more details.
It is known (cf, e.g., \cite[Theorem 14.3]{MR3185174}, or \cite{WOS:000343456100011,KIM20142479}) that the L\'evy measure $\nu$ of the process $(\mathbf{Z}_t)_{t \geq 0}$ is
\[
	\nu(\mathrm{d} \mathbf{z}) = \frac{C_{d, \alpha}}{|\mathbf{z}|^{\alpha+d} } \mathrm{d} \mathbf{z} ,\quad\nu(\{\mathbf{0}\})=0,
\]
where the constant $C_{d, \alpha}$ is given by
\begin{align*}
C_{d, \alpha}=\alpha 2^{\alpha-1} \pi^{-d / 2} \frac{\Gamma((d+\alpha) / 2)}{\Gamma(1-\alpha / 2)} ,
\end{align*} 
see e.g, \cite{WOS:000343456100011,MR481057}. By the L\'evy-It\^o decomposition (cf. \cite{MR3185174,applebaum2009levy}), there exists a Poisson random measure $N(\mathrm{ d}{\bf z} , \mathrm{d} t)$ such that
\begin{equation*}
	\mathrm{d} {\bf Z}_t = \int_{|\bf z| > 1} {\bf z} N({\rm d}{\bf z} , {\rm d} t) + \int_{|\bf z| \leqslant 1} {\bf z} \widetilde{N}({\rm d} {\bf z}, {\rm d}t ) ,
\end{equation*}
where $\widetilde{N}({\rm d} {\bf z}, {\rm d}t )  = N({\rm d}{\bf z} , {\rm d} t) - \nu( {\rm d} {\bf z}){\rm d} t$ is the compensated Poisson random measure. We refer to \cite{applebaum2009levy,MR2160585,arapostathis2019uniform,bao2022coupling,baoyuan2017,bao2011comparison,bao2012stochastic,chen2016heat,dong2020irreducibility,KUHN20192654} and references therein for more developments of SDEs driven by L\'evy processes. 

\begin{example}
   Given $\theta \geq 0$, the following SDE on $\mathbb{R}^d$ satisfies Assumption \ref{a:Assumption}:
 \begin{align*}
    \mathrm{d} \mathbf{X}_t=-\mathbf{X}_t\left|\mathbf{X}_t\right|^\theta \mathrm{d} t+ \mathrm{d}\mathbf{Z}_t, \quad \mathbf{X}_0=\mathbf{x} \in \mathbb{R}^d.
\end{align*}
For $\theta=0$, this is a linear SDE, whereas for $\theta>0$, it is a superlinear SDE.
\end{example}

For any $\mathbf{x} \in \mathbb{R}^d$ and $t > 0$, the empirical measure $\mathcal{E}_t^{\mathbf{x}}$ associated with the process $\left(\mathbf{X}_t\right)_{t \geqslant 0}$ in \eqref{e:GSDE} is defined as:
\begin{equation*}
\mathcal{E}_t^\mathbf{x}(A):=\frac{1}{t} \int_0^t \delta_{\mathbf{X}_s^\mathbf{x}}(A) \mathrm{d} s, \quad \forall A \in \mathcal{B}\left(\mathbb{R}^d\right),
\end{equation*}
where $\mathcal{B}\left(\mathbb{R}^d\right)$ denotes the collection of Borel sets on $\mathbb{R}^d$, $\delta_{\mathbf{y}}(\cdot)$ is the Dirac delta measure defined as $\delta_{\mathbf{y}}(A)=1$ if $\mathbf y \in A$ and $\delta_{\mathbf{y}}(A)=0$ otherwise.  Note that we write $\mathbf{X}_t^\mathbf{x}$ instead of $\mathbf{X}_t$ to emphasize the starting point $\mathbf{X}_0=\mathbf{x}$ for a given $\mathbf{x} \in \mathbb{R}^d$. For any measurable test function $h: \mathbb{R}^d \rightarrow \mathbb{R}$, the empirical measure satisfies:
\begin{equation}\label{empirical}
\mathcal{E}_t^\mathbf{x}(h)=\frac{1}{t} \int_0^t h\left(\mathbf{X}_s^\mathbf{x}\right) \mathrm{d} s .
\end{equation}

In this paper, we examine the existence and non-existence of the CLT for the empirical measures $\mathcal{E}_t^{\mathbf{x}}(h)$ of the SDE \eqref{e:GSDE} under different values of the parameter $\theta$ in Assumption \ref{a:Assumption}, using bounded measurable or Lipschitz continuous test functions. As summarized in Table \ref{table1}, our results reveal that: for the bounded measurable function $h$, the empirical measure $\mathcal{E}_t^\mathbf{x}(h)$ admits a normal CLT for any $\theta \ge 0$; in contrast, for the Lipschitz continuous function $h$, different results emerge. When $\theta = 0$, the empirical measure $\mathcal{E}_t^\mathbf{x}(h)$ does not necessarily satisfy the CLT (non-CLT). However, when $\theta >1-\frac{\alpha}{2}$ , the CLT is indeed satisfied. This observation underscores the pivotal role of the parameter $\theta$ in governing the asymptotic behavior of empirical measures, particularly with respect to the existence or non-existence of the CLT.
\begin{table}[ht]{\label{table1}}
\centering
\caption{Existence and Non-existence of Normal CLT for SDE \eqref{e:GSDE}.}
\begin{tabular}{|c|c|c|c|}
\hline
Test function $h$ & $\theta = 0$ & $0<\theta \leq 1-\frac{\alpha}{2}$ & $\theta > 1-\frac{\alpha}{2}$ \\
\hline
Bounded measurable& Normal CLT & Normal CLT & Normal CLT \\
\hline
Lipschitz continuous&No Normal CLT & Unknown & Normal CLT \\
\hline
\end{tabular}
\end{table}

There has been considerable research on the central limit problem for functionals of solutions to SDEs.  For SDEs driven by Brownian motion, several results have been established (see, for example, \cite{bao2024limit,suo2021central,lu2022central,fang2019multivariate}). In particular, Bao et al. \cite{bao2024limit} established the asymptotic normality of integrals with the form $t^{-\frac{1}{2}}\int_{0}^{t}h(\mathbf X_{s}^{\mathbf{x}})\mathrm{d}s$ for one class of continuous functions $h$, where $\mathbf X_{s}^{\bf x}$ solves the Brownian SDE with dissipative and Hölder continuous drift. Bao et al. \cite{MR4128304} established CLT for additive functionals of path-dependent SDEs.

For the central limit problem related to Lévy processes, there are also significant results, such as those in \cite{qiao2022limit,dereich2016multilevel,xu2019approximation,li2023stable}, which primarily focus on the scaled average of independent random variables. In particular, \cite{jin2024approximation} established the CLT for the empirical measure of piecewise $\alpha$ stable Ornstein-Uhlenbeck processes with a bounded measurable test function. In this paper, we investigate the central limit problem for SDEs driven by rotationally invariant $\alpha$-stable Lévy processes, our findings reveal that for SDEs with linear and superlinear drift terms, under Lipschitz continuous test functions $h$, the limiting behaviors of the empirical measures $\mathcal{E}_t^\mathbf{x}(h)$ are markedly different.

Furthermore, research on the non-CLT related to Lévy processes remains relatively sparse. \cite{MR2362589} established a functional non-CLT for jump-diffusions with periodic coefficients driven by stable Lévy processes with $\alpha \in (1,2)$, where the limiting process is shown to be an $\alpha$-stable Lévy process with an averaged jump measure. This paper demonstrates that for SDEs driven by stable Lévy processes, under specific conditions on both the drift and diffusion terms, the empirical measures $\mathcal{E}_t^{\mathbf{x}}(h)$ converge to a stable distribution, as shown in proposition \ref{thm:Limit}, which also constitutes a significant contribution to the study.

\subsection{Notations}
Throughout this paper, we use the following notations. Let $\mathbb{R}^d$ (and $\mathbb{R}_{+}^d$ ), $d \geqslant 1$, be real (nonnegative) valued $d$-dimensional vectors, and write $\mathbb{R}$ (and $\mathbb{R}_{+}$) for the case when $d=1$. For any $\mathbf{x}, \mathbf y \in \mathbb{R}^d,\langle \mathbf{x}, \mathbf y\rangle$ means the inner product, that is, $\langle \mathbf{x}, \mathbf y\rangle=\mathbf{x}^{\mathrm{T}} \mathbf y$ where $\mathbf{x}^{\mathrm{T}}$ means the transpose for $\mathbf{x}$. This reduces the Euclidean metric denoted by $|\cdot|$. Given a matrix $\mathbf A \in \mathbb{R}^{d \times d}$, its Hilbert-Schmidt norm is $\|\mathbf{A}\|_{\mathrm{HS}}=\sqrt{\sum_{i, j=1}^d \mathbf A_{i j}^2}=\sqrt{\operatorname{Tr}\left(\mathbf A^{\mathrm{T}} \mathbf A\right)}$ and its operator norm is $\|\mathbf A\|_{\mathrm{op}}=\sup _{|\mathbf{v}|=1}|\mathbf A \mathbf{v}|$.

Let $\mathcal{B}\left(\mathbb{R}^d, \mathbb{R}\right)$ be the collection of all $\mathbb{R}$-valued Borel measurable functions defined on $\mathbb{R}^d$, and $\mathcal{B}_b\left(\mathbb{R}^d, \mathbb{R}\right)$ be the collection of all $\mathbb{R}$-valued Borel bounded measurable functions defined on $\mathbb{R}^d$. Let $\mathcal{C}^1\left(\mathbb{R}^d, \mathbb{R}\right)$ be the collection of all $\mathbb{R}$-valued differential continuous functions defined on $\mathbb{R}^d$, and $\mathcal{C}_b\left(\mathbb{R}^d, \mathbb{R}\right)$ be the collection of all $\mathbb{R}$-valued bounded measurable functions defined on $\mathbb{R}^d$. For $f \in \mathcal{C}_b\left(\mathbb{R}^d, \mathbb{R}\right)$, denote $\|f\|_{\infty}:=\sup _{\mathbf{x} \in \mathbb{R}^d}|f(\mathbf{x})|$. For $f \in \mathcal{C}^1\left(\mathbb{R}^d, \mathbb{R}\right)$, $\nabla f \in \mathbb{R}^d$ is the gradient of $f$, and we denote 
\begin{align*}
     \|\nabla  f\|_{\infty} := \sup_{\mathbf{x},\mathbf v \in \R^d} \left\{ \langle \nabla  f (\mathbf{x}),\mathbf v \rangle \colon \mathbf{x}, \mathbf v \in \R^d; \; \abs{\mathbf v} \leq 1 \right\}.
\end{align*}

Let $\mathcal{M}_1\left(\mathbb{R}^d\right)$ be the space of probability measures on $\mathbb{R}^d$, equipped with the  Borel $\sigma$-field $\mathcal{B}\left(\mathbb{R}^d \right)$. For any probability measure $\mu \in \mathcal{M}_1\left(\mathbb{R}^d\right)$, denote $\mu(f)=\int_{\mathbb{R}^d} f(\mathbf{x}) \mu(\mathrm{d}\mathbf{x})$. Let $\mathscr{C}\left(\mu_1, \mu_2\right)$ denote the class of couplings of probability measures $\mu_1, \mu_2\in \mathcal{M}_1\left(\mathbb{R}^d\right)$. The $L^1$-Wasserstein distance between $\mu_1$ and $\mu_2$, denoted by $\mathbb{W}_1\left(\mu_1, \mu_2\right)$, is defined as, 
$$
\mathbb{W}_1\left(\mu_1, \mu_2\right):=\inf _{\Pi \in \mathscr{C}\left(\mu_1, \mu_2\right)}\left(\int_{\mathbb{R}^d \times \mathbb{R}^d} |\mathbf{x}-\mathbf y| \Pi(\mathrm{d}\mathbf{x}, \mathrm{d}\mathbf y)\right) .
$$
By Kantorovich's dual formula, we have
\begin{align} \label{Kantorovich}
\mathbb{W}_1\left(\mu_1, \mu_2\right)=\sup _{f\in \mathrm{Lip}(1)}\left|\mu_1(f)-\mu_2(f)\right|
\end{align}
where $\operatorname{Lip}(1)=\left\{f: \mathbb{R}^d \rightarrow \mathbb{R} ;|f(\mathbf{x})-f(\mathbf y)| \leq|\mathbf{x}-\mathbf y|,\ \forall \ {\bf x}, {\bf y} \in \mathbb{R}^d \right\}$. 
The total variation distance of measures $\mu_1$ and $\mu_2$, denoted by $\left\|\mu_1-\mu_2\right\|_{\mathrm{TV}}$, is defined as, 
$$
\left\|\mu_1-\mu_2\right\|_{\mathrm{TV}}=2 \mathbb{W}_0\left(\mu_1, \mu_2\right):=\int_{\mathbb{R}^d}\left|\mu_1-\mu_2\right| \mathrm{d}\mathbf{x} =\sup _{\|f\|_{\infty} \leq 1}\left|\mu_1(f)-\mu_2(f)\right|.
$$

A sequence of random variables $\left\{Y_n, n \geqslant 1\right\}$ is said to converge weakly or converge in distribution to a limit $Y_{\infty}$, denoted by $Y_n \Rightarrow Y_{\infty}$, if their distribution functions converge weakly. In addition, $Y_n \xrightarrow{\mathrm{P}} Y_{\infty}$ means convergence in probability if 
\begin{align*}
    \lim _{n \rightarrow \infty} \mathbb{P}\left(\left|Y_n-Y_{\infty}\right|>\varepsilon\right)=0, \quad \text{ for all } \varepsilon>0.
\end{align*}

The associated Markov semigroup $(\mathrm{P}_t)_{t \geqslant 0}$ of the SDE \eqref{e:GSDE} is given by
$$
\mathrm{P}_t f(\mathbf{x})=\mathbb{E} f\left(\mathbf{X}_t^\mathbf{x}\right), \quad \forall\ \mathbf{x} \in \mathbb{R}^d, \quad f \in \mathcal{B}_b\left(\mathbb{R}^d, \mathbb{R}\right),\quad t\geq 0.
$$
The infinitesimal generator $\mathcal{A}$ of the process $(\mathbf{X}^{\mathbf{x}}_t)_{t \geqslant 0}$ is defined as follows: for $f \in \mathcal{D}(\mathcal{A})$,  
\begin{align}\label{generator}  
\mathcal{A} f(\mathbf{x})=\left\langle \mathbf{b}(\mathbf{x}), \nabla f(\mathbf{x})\right\rangle + \int_{\mathbb{R}_0^d}\left[f(\mathbf{x}+\mathbf z)-f(\mathbf{x})-\langle\mathbf  z, \nabla f(\mathbf{x})\rangle \mathbf{ 1}_{(0,1)}(|\mathbf z|)\right] \nu(\mathrm{d} \mathbf z),
\end{align}
where $\mathbb{R}_0^d=\mathbb{R}^d \backslash\{0\}$, and $\mathcal{D}(\mathcal{A})$ is the domain of $\mathcal{A}$, which depends on the function space where the semigroup $(\mathrm{P}_t)_{t \geqslant 0}$ operates. 

For a time homogeneous Markov process  $(\mathbf X_t^\mathbf{x})_{t\geq 0}$ on $\R^d$, it is ergodic \cite{borovkov2001piece} if there exists a unique invariant probability $\pi$ such that  
\begin{equation*}
\lim_{t\to \infty} \| \mathcal{L}(\mathbf X^\mathbf{x}_t) - \pi\|_{\rm TV} = 0, \quad \mathbf{x}\in \R^d ,
\end{equation*}
 where $\mathcal{L}({\bf X}_t^{\bf x})$ is the distribution of ${\bf X}_t^{\bf x}$. 

 The ergodicity can be further classified as the following categories: 

 (i) \emph{$V$-exponential ergodicity}: There exists some continuous function $V: \R^d \rightarrow [1,\infty)$ such that there exist some positive constants $c,C$ satisfying
\begin{eqnarray*}
\sup _{|f| \leq V }\left|\mathbb{E}\left[f\left(\mathbf X_t^\mathbf{x}\right)\right]-\pi(f)\right|
&\leq& CV(\mathbf{x})e^{-ct}, \quad \mathbf{x}\in \R^d.
\end{eqnarray*}

(ii) \emph{$V$-uniform ergodicity}: There exists some continuous function $V: \R^d \rightarrow [1,\infty)$ such that there exist some positive constants $c,C$ satisfying
\begin{eqnarray*}
\sup _{|f| \leq V }\left|\mathbb{E}\left[f\left(\mathbf X_t^\mathbf{x}\right)\right]-\pi(f)\right|
&\leq& Ce^{-ct}, \quad \mathbf{x}\in \R^d.
\end{eqnarray*}

It can be shown that $\left(\mathbf X^{\mathbf{x}}_t\right)_{t \geqslant 0}$ in \eqref{e:GSDE} is exponentially ergodic, we denote $\mu$ by its invariant measure, and more details can be found in Lemma \ref{LemmaErgodicity1}. For any Lipschitz continuous test function $h$, it is easy to know that $\mu(h)<\infty$ by Lemma \ref{LemmaErgodicity2}. Let $f_h$ be the solution to the following Poisson equation:  
\begin{equation} \label{Poisson}
\mathcal{A} f_h(\mathbf{x})=h(\mathbf{x})-\mu(h),
\end{equation}
where $\mathcal{A}$ is defined by \eqref{generator} and $h$ is a bounded measurable or Lipschitz continuous test function.

\subsection{Main Results}{\label{Results}}

 The main theorems of this paper are presented below, whose proofs are provided in Section \ref{ProofResults}. We define $\mathcal{V}\left(f_h\right)$ as the following:
\begin{equation}
	\label{e:Vh}
	\mathcal{V}\left(f_h\right)=\int_{\mathbb{R}^d} \int_{\mathbb{R}_0^d}[f_h(\mathbf{x}+\mathbf z)-f_h(\mathbf{x})]^2 \nu(\mathrm{d} \mathbf z) \mu(\mathrm{d} \mathbf{x}),
\end{equation}
and let  $\mathcal{N}\left(0, \mathcal{V}\left(f_h\right)\right)$ be the center Gaussian distribution with variance $\mathcal{V}\left(f_h\right)$.

When the test function $h:\R^d \rightarrow \R$
is a bounded measurable function, the following theorem yields that $\sqrt{t}\left[\mathcal{E}_t^{\mathbf{x}}(h)-\mu(h)\right]$ admits a normal CLT. 

\begin{theorem}\label{thm:Limit1}
Let Assumption \ref{a:Assumption} hold. Consider the SDE \eqref{e:GSDE} and the empirical measure $\mathcal{E}_t^\mathbf{x}(h)$ defined in \eqref{empirical}. For all $\theta \ge 0$ and for any bounded measurable function $h:\R^d \rightarrow \R$, the term $\sqrt{t}\left[\mathcal{E}_t^{\mathbf{x}}(h)-\mu(h)\right]$ converges weakly to the Gaussian distribution $\mathcal{N}\left(0, \mathcal{V}\left(f_h\right)\right)$ as $t \rightarrow \infty$,  where $\mathcal{V}(f_n)$ is defined in \eqref{e:Vh}.
\end{theorem}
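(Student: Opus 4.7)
The plan is to carry out the classical Gordin/Kipnis-Varadhan martingale decomposition via the Poisson equation \eqref{Poisson}, and then apply a continuous-time martingale CLT to the resulting martingale. Since $h$ is bounded measurable and the process $(\mathbf{X}_t^{\mathbf{x}})_{t\geq 0}$ is $V$-uniformly ergodic (as recalled just before the theorem), one has $\|\mathrm{P}_s h - \mu(h)\|_\infty \leq C\|h\|_\infty e^{-cs}$, which makes
\begin{equation*}
	f_h(\mathbf{x}) = -\int_0^\infty \bigl(\mathrm{P}_s h(\mathbf{x}) - \mu(h)\bigr)\,\mathrm{d} s
\end{equation*}
a well-defined bounded solution of \eqref{Poisson} with $\|f_h\|_\infty \leq C\|h\|_\infty/c$. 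A second input I would need is a gradient-type smoothing estimate for the stable semigroup, of the form $\|\nabla \mathrm{P}_s h\|_\infty \leq C s^{-1/\alpha}\|h\|_\infty$ for small $s>0$; since $\alpha > 1$ the integral $\int_0^1 s^{-1/\alpha}\,\mathrm{d} s$ is finite, and combining this with the exponential decay yields $\|\nabla f_h\|_\infty < \infty$.

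Applying It\^o's formula to $f_h(\mathbf{X}_t^{\mathbf{x}})$ via the L\'evy-It\^o decomposition of $\mathbf{Z}$ and using $\mathcal{A} f_h = h - \mu(h)$ gives the decomposition
\begin{equation*}
	\sqrt t\,\bigl[\mathcal{E}_t^{\mathbf{x}}(h) - \mu(h)\bigr] = \frac{1}{\sqrt t}\bigl(f_h(\mathbf{X}_t^{\mathbf{x}}) - f_h(\mathbf{x})\bigr) + \frac{M_t}{\sqrt t},
\end{equation*}
where
\begin{equation*}
	M_t = \int_0^t \int_{\R_0^d} \bigl[f_h(\mathbf{X}_{s-}^{\mathbf{x}} + \mathbf z) - f_h(\mathbf{X}_{s-}^{\mathbf{x}})\bigr]\, \widetilde{N}(\mathrm{d}\mathbf z, \mathrm{d} s).
\end{equation*}
The boundary term vanishes in $L^\infty$ because $f_h$ is bounded, so it remains only to prove $M_t/\sqrt t \Rightarrow \mathcal{N}(0, \mathcal{V}(f_h))$.

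For this step I would invoke the martingale CLT for purely discontinuous local martingales (in the form of Jacod-Shiryaev or Ethier-Kurtz). Two hypotheses must be verified. First, the predictable quadratic variation satisfies
\begin{equation*}
	\frac{\langle M\rangle_t}{t} = \frac{1}{t}\int_0^t \Phi(\mathbf{X}_s^{\mathbf{x}})\,\mathrm{d} s, \qquad \Phi(\mathbf{x}) := \int_{\R_0^d}\bigl[f_h(\mathbf{x}+\mathbf z)-f_h(\mathbf{x})\bigr]^2\,\nu(\mathrm{d}\mathbf z).
\end{equation*}
Splitting the inner integral at $|\mathbf z|=1$ and using $\|f_h\|_\infty<\infty$ on $\{|\mathbf z|>1\}$ together with $|f_h(\mathbf{x}+\mathbf z)-f_h(\mathbf{x})|\leq \|\nabla f_h\|_\infty |\mathbf z|$ and $\int_{|\mathbf z|\leq 1}|\mathbf z|^2\,\nu(\mathrm{d}\mathbf z)<\infty$ on $\{|\mathbf z|\leq 1\}$ shows that $\Phi$ is bounded; exponential ergodicity then yields $t^{-1}\langle M\rangle_t \to \mu(\Phi)=\mathcal{V}(f_h)$ in probability. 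Second, the individual jumps satisfy $\sup_{s\leq t}|\Delta M_s|\leq 2\|f_h\|_\infty$, so after dividing by $\sqrt t$ the Lindeberg condition is trivial and the weak limit of $M_t/\sqrt t$ must be centred Gaussian.

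The principal technical obstacle is the regularity input for $f_h$: bounded measurable $h$ alone does not make $f_h$ smooth, and the finiteness (together with the ergodic identification) of $\mathcal{V}(f_h)$ rests on the non-trivial smoothing of the rotationally symmetric $\alpha$-stable semigroup with a singularity of order $s^{-1/\alpha}$ at zero, which is integrable precisely because $\alpha>1$. Once this regularity estimate is secured, the remaining ergodic and martingale CLT arguments are fairly standard.
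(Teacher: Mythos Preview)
Your argument is essentially correct for $\theta>0$, but there is a genuine gap when $\theta=0$. You invoke ``$V$-uniform ergodicity'' to conclude $\|\mathrm{P}_s h-\mu(h)\|_\infty\le C\|h\|_\infty e^{-cs}$ uniformly in $\mathbf{x}$, and from this that $f_h$ and $\nabla f_h$ are bounded. However, the paper only establishes this uniform total-variation rate for $\theta>0$ (Lemma~\ref{LemmaErgodicity1}(ii)); for $\theta=0$ the prefactor $c_1(\mathbf{x})$ depends on the starting point (Lemma~\ref{LemmaErgodicity1}(i)), so one cannot conclude $\|f_h\|_\infty<\infty$. Indeed, Lemma~\ref{lem:reghf} proves only the polynomial bounds $|f_h(\mathbf{x})|,\,|\nabla f_h(\mathbf{x})|\le C(1+|\mathbf{x}|^2)^{\beta/2}$ with $\beta<\alpha/4$, and for $\theta=0$ the gradient estimate is obtained via mollification and Zhang's Bismut-type derivative formula rather than directly from semigroup smoothing.

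Once $f_h$ is unbounded, several steps of your argument break down: the boundary term $t^{-1/2}(f_h(\mathbf{X}_t^{\mathbf{x}})-f_h(\mathbf{x}))$ is no longer $O(t^{-1/2})$ in $L^\infty$ and must be handled through the moment bound \eqref{EXtbeta}; the functional $\Phi$ is no longer bounded, so the ergodic theorem for $t^{-1}\langle M\rangle_t$ requires the $V_\beta$-ergodicity of Lemma~\ref{LemmaErgodicity0}; and, most importantly, your Lindeberg verification via $\sup_{s\le t}|\Delta M_s|\le 2\|f_h\|_\infty$ fails outright. The paper handles all this by slicing the martingale into increments $U_i$ over unit intervals, establishing uniform moment bounds $\mathbb{E}|U_i|^2\le C(1+|\mathbf{x}|^2)^\beta$ and $\mathbb{E}|U_i|^4\le C(1+|\mathbf{x}|^{4\beta})$ (this is where $4\beta<\alpha$ is used), and then checking the two conditions of a discrete-time martingale CLT by a truncation argument. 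Your continuous-time route is viable for $\theta>0$ and is arguably cleaner there, but for $\theta=0$ you would still need the growth bounds of Lemma~\ref{lem:reghf} and a quantitative Lindeberg argument in place of the trivial one.
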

For a Lipschitz continuous test function $h$, the normal CLT is satisfied for $\theta>1-\frac{\alpha}{2}$, but this theorem does not necessarily hold when $\theta=0$.
\begin{theorem}\label{thm:Limit2}
Let Assumption \ref{a:Assumption} hold. Consider the SDE \eqref{e:GSDE} and the empirical measure $\mathcal{E}_t^\mathbf{x}(h)$ defined in \eqref{empirical}. For any Lipschitz continuous test function $h:\R^d \rightarrow \R$, we have

(i) For the case that $\theta=0$, the term $\sqrt{t}\left[\mathcal{E}_t^{\mathbf{x}}(h)-\mu(h)\right]$ does not necessarily converge weakly to a Gaussian distribution as $t \rightarrow \infty$, that is, the CLT does not necessarily hold.

(ii) For the case that $\theta>1-\frac{\alpha}{2}$, the term $\sqrt{t}\left[\mathcal{E}_t^{\mathbf{x}}(h)-\mu(h)\right]$ converges weakly to the Gaussian distribution $\mathcal{N}\left(0, \mathcal{V}\left(f_h\right)\right)$ as $t \rightarrow \infty$,  where $\mathcal{V}(f_n)$ is defined in \eqref{e:Vh} .
\end{theorem}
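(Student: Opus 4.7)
The plan is to handle part (ii) by the standard Poisson-equation / martingale CLT scheme combined with a careful regularity analysis of the corrector, and to handle part (i) by exhibiting an explicit Ornstein--Uhlenbeck counterexample.

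For (ii), using the exponential ergodicity supplied by Lemmas \ref{LemmaErgodicity1} and \ref{LemmaErgodicity2}, I would define the corrector
\[
  f_h(\mathbf{x}) = \int_0^\infty \bigl(\mathrm{P}_t h(\mathbf{x}) - \mu(h)\bigr)\,\mathrm{d}t,
\]
which is the classical solution of the Poisson equation \eqref{Poisson}. Applying It\^o's formula to $f_h(\mathbf{X}_t^\mathbf{x})$ yields
\[
  \sqrt{t}\bigl(\mathcal{E}_t^\mathbf{x}(h) - \mu(h)\bigr) = \frac{f_h(\mathbf{X}_t^\mathbf{x}) - f_h(\mathbf{x})}{\sqrt{t}} - \frac{M_t}{\sqrt{t}},\qquad M_t := \int_0^t\!\!\int_{\mathbb{R}_0^d} \bigl(f_h(\mathbf{X}_{s-}^\mathbf{x}+\mathbf{z}) - f_h(\mathbf{X}_{s-}^\mathbf{x})\bigr)\,\widetilde{N}(\mathrm{d}\mathbf{z},\mathrm{d}s).
\]
After showing that the boundary term vanishes in probability (via the moment bounds on $\mu$ from Lemma \ref{LemmaErgodicity2} together with a controlled growth of $f_h$), a Rebolledo / Jacod--Shiryaev martingale CLT applies to $M_t/\sqrt{t}$: its two hypotheses are the convergence of the predictable quadratic variation
\[
  \tfrac{1}{t}\langle M\rangle_t = \tfrac{1}{t}\int_0^t G(\mathbf{X}_s^\mathbf{x})\,\mathrm{d}s,\qquad G(\mathbf{x}) := \int_{\mathbb{R}_0^d}\bigl(f_h(\mathbf{x}+\mathbf{z}) - f_h(\mathbf{x})\bigr)^2\,\nu(\mathrm{d}\mathbf{z}),
\]
which by Birkhoff reduces to the finiteness $\mu(G) = \mathcal{V}(f_h)<\infty$, together with a Lindeberg-type ergodic average that follows from the same bound.

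The crux, and the main obstacle, is proving $\mathcal{V}(f_h)<\infty$; this is exactly where the threshold $\theta>1-\alpha/2$ enters. A naive Lipschitz bound $|f_h(\mathbf{x}+\mathbf{z})-f_h(\mathbf{x})|\le C|\mathbf{z}|$ cannot close the argument, because $\int_{|\mathbf{z}|>1}|\mathbf{z}|^2\,\nu(\mathrm{d}\mathbf{z}) = \infty$ whenever $\alpha<2$. Instead, I would combine a synchronous coupling, which exploits the super-linear dissipativity $\langle\mathbf{b}(\mathbf{x})-\mathbf{b}(\mathbf{y}),\mathbf{x}-\mathbf{y}\rangle\le -K_2|\mathbf{x}-\mathbf{y}|^{2+\theta}$ for $|\mathbf{x}-\mathbf{y}|>L$, with a reflection coupling in the regime $|\mathbf{x}-\mathbf{y}|\le L$, to produce a mixed Wasserstein contraction of the form
\[
  \mathbb{W}_1\bigl(\delta_\mathbf{x}\mathrm{P}_t,\delta_\mathbf{y}\mathrm{P}_t\bigr) \lesssim \min\bigl(|\mathbf{x}-\mathbf{y}|,\,t^{-1/\theta}\bigr)
\]
in an initial phase, followed by exponential decay once the coupled pair has entered the bounded regime. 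Integrating in $t$ via Kantorovich's duality \eqref{Kantorovich} transfers this to the pointwise estimate
\[
  |f_h(\mathbf{x}+\mathbf{z}) - f_h(\mathbf{x})| \le C\min\bigl(|\mathbf{z}|,\,|\mathbf{z}|^{1-\theta}\bigr),
\]
and the integrability $\int_{\mathbb{R}_0^d}\min(|\mathbf{z}|^2,|\mathbf{z}|^{2-2\theta})\,\nu(\mathrm{d}\mathbf{z})<\infty$ is equivalent to $2-2\theta<\alpha$, that is, $\theta>1-\alpha/2$. Assembling this coupling-based gradient estimate rigorously, while respecting the non-Lipschitz drift, is the main technical challenge.

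For (i), a single explicit counterexample suffices. Choose $\mathbf{b}(\mathbf{x})=-\mathbf{x}$, which satisfies Assumption \ref{a:Assumption} with $\theta=0$, and the Lipschitz test function $h(\mathbf{x})=x_1$; by rotational symmetry $\mu(h)=0$. Solving the linear SDE explicitly and applying Fubini gives the clean identity
\[
  \int_0^t \mathbf{X}_s^\mathbf{x}\,\mathrm{d}s = \mathbf{Z}_t - \mathbf{X}_t^\mathbf{x} + \mathbf{x},
\]
so $\sqrt{t}\bigl(\mathcal{E}_t^\mathbf{x}(h)-\mu(h)\bigr) = t^{-1/2}\bigl(Z_t^{(1)} - X_t^{(1),\mathbf{x}} + x_1\bigr)$. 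Since $\mathbf{X}_t^\mathbf{x}$ is tight (it converges in law to $\mu$), while $t^{-1/\alpha}Z_t^{(1)}$ converges to a symmetric $\alpha$-stable random variable with $1/\alpha>1/2$, the self-similarity identity $t^{-1/2}Z_t^{(1)} \stackrel{d}{=} t^{1/\alpha-1/2}Z_1^{(1)}$ shows that $t^{-1/2}Z_t^{(1)}$ is not tight, and therefore $\sqrt{t}\bigl(\mathcal{E}_t^\mathbf{x}(h)-\mu(h)\bigr)$ cannot admit any weak limit---in particular no Gaussian one. Under the correct scaling $t^{1-1/\alpha}$ the limit is a non-Gaussian $\alpha$-stable random variable, as formalised in Proposition \ref{thm:Limit}; this rules out the normal CLT and establishes (i).
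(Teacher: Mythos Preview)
Your overall architecture for both parts matches the paper's: part~(i) via the Ornstein--Uhlenbeck counterexample (which is exactly how the paper invokes Proposition~\ref{thm:Limit}), and part~(ii) via the Poisson equation / It\^o decomposition / martingale CLT. The genuine divergence is in how you obtain the crucial finiteness $\mathcal{V}(f_h)<\infty$ and the threshold $\theta>1-\alpha/2$. You propose a refined short-time Wasserstein estimate $\mathbb{W}_1(\delta_\mathbf{x}\mathrm{P}_t,\delta_\mathbf{y}\mathrm{P}_t)\lesssim\min(|\mathbf{x}-\mathbf{y}|,t^{-1/\theta})$ via synchronous coupling in the super-linear regime, leading to the \emph{uniform} increment bound $|f_h(\mathbf{x}+\mathbf{z})-f_h(\mathbf{x})|\le C\min(|\mathbf{z}|,|\mathbf{z}|^{1-\theta})$. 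The paper takes a different path: it bounds $f_h$ itself, proving $|f_h(\mathbf{x})|\le C(1+|\mathbf{x}|^2)^{(1-\theta)/2}+C$ and $\|\nabla f_h\|_\infty<\infty$ (Lemma~\ref{regularity1}), the former coming from the integrated Lyapunov estimate $\int_0^1\mathbb{E}V_\beta(\mathbf{X}_t^\mathbf{x})\,\mathrm{d}t\le C(1+|\mathbf{x}|^2)^{(\beta-\theta)/2}+C$ of Lemma~\ref{LemmaErgodicity2}; it then splits the $\nu$-integral at $|\mathbf{z}|=1$, using the gradient bound for small jumps and the growth bound for large jumps, so that $G_{f_h}(\mathbf{x})\lesssim 1+(1+|\mathbf{x}|^2)^{1-\theta}$ and $\mu(G_{f_h})<\infty$ iff $2(1-\theta)<\alpha$. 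Your route yields a cleaner $\mathbf{x}$-uniform control on $G_{f_h}$ but obliges you to establish a coupling estimate that is not already in the paper's lemmas (Lemma~\ref{contract} only gives the cruder $e^{-\lambda t}|\mathbf{x}-\mathbf{y}|/(1+|\mathbf{x}-\mathbf{y}|)$ for $t>1$); the paper's route is more indirect but recycles existing Lyapunov machinery. A secondary difference: you invoke a continuous-time Rebolledo CLT with Birkhoff for $t^{-1}\langle M\rangle_t\to\mathcal{V}(f_h)$, whereas the paper discretises into unit-length martingale differences $U_i$ and verifies a discrete martingale CLT by hand. Note that Birkhoff alone only gives convergence for $\mu$-a.e.\ initial point; for arbitrary $\mathbf{x}$ you still need an ergodic upgrade of the kind the paper carries out explicitly when proving \eqref{U6}.
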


\section*{Acknowledgement}
We are deeply grateful to our supervisor, Lihu Xu, for his patience in discussing this research with us and for his meticulous revisions of the early drafts of this paper. We also sincerely appreciate Xinghu Jin for his valuable suggestions.

\section{Auxiliary Lemmas}
In this section, we mainly present the conclusions on the ergodicity and contractivity of the solution $(\mathbf X^\mathbf{x}_{t})_{t\ge0}$ of SDE \eqref{e:GSDE}.

By Luo et al. \cite[Theorem 1.1 and Proposition 1.5]{Luo2016RefinedBC}, we have the following ergodic-type result, which shows that the solution $(\mathbf X^\mathbf{x}_{t})_{t\ge0}$ of SDE \eqref{e:GSDE} admits a unique invariant probability measure $\mu$. Furthermore, the law of $\mathbf X^\mathbf{x}_{t}$ converges to $\mu$ at a exponential rate. 

\begin{lemma}[Ergodicity for the SDE \eqref{e:GSDE}]\label{LemmaErgodicity1}
Denote by $\left(\mathbf X_t^\mathbf{x}\right)_{t \geq 0}$ the solution to the SDE \eqref{e:GSDE}. 

(i) For the case $\theta=0$, $\left(\mathbf X^\mathbf{x}_t\right)_{t \geq 0}$ admits a unique invariant probability measure $\mu$. Additionally, there exist some positive constants $c_1(\mathbf{x})$ (depending on the initial point ${\bf x}$) and $c_2$, such that 
\begin{equation*}
\| \mathcal{L}(\mathbf X^\mathbf{x}_t) - \mu\|_{\rm TV} \leq c_1(\mathbf{x}) {\rm e}^{-c_2 t}, \quad \mathbf{x} \in \mathbb{R}^d, \quad t>0.
\end{equation*}

(ii) For the case $\theta>0$, $\left(\mathbf X^\mathbf{x}_t\right)_{t \geq 0}$ admits a unique invariant probability measure $\mu$. Furthermore, there exists some uniformly positive constants $c_1, c_2$ that are independent of ${\bf x}$, such that 
\begin{equation*}
\| \mathcal{L}(\mathbf X^\mathbf{x}_t) - \mu\|_{\rm TV} \leq c_1 {\rm e}^{-c_2 t}, \quad \mathbf{x} \in \mathbb{R}^d, \quad t>0.
\end{equation*}
\end{lemma}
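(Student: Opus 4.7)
The plan is to deduce the lemma from the standard Meyn--Tweedie/Harris framework: a Foster--Lyapunov drift condition combined with a minorization (small-set) condition. For the minorization I would use that $(\mathbf{Z}_t)_{t\ge 0}$ is a rotationally symmetric $\alpha$-stable process whose transition density is everywhere strictly positive; combined with an interlacing argument (or a Stroock--Varadhan-type support theorem for jump diffusions) and the non-explosion of the SDE under Assumption~\ref{a:Assumption}, this transfers to strong Feller regularity and irreducibility of the semigroup $\mathrm{P}_t$. Consequently, for every compact $C\subset\R^d$ and every fixed $t_0>0$ there exist $\varepsilon>0$ and a probability measure $\eta_C$ with $\mathrm{P}_{t_0}(\mathbf{x},\cdot)\ge \varepsilon\,\eta_C(\cdot)$ uniformly over $\mathbf{x}\in C$, which is precisely the small-set condition of Meyn--Tweedie.

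For the Lyapunov step I would take $V(\mathbf{x})=(1+|\mathbf{x}|^2)^{p/2}$ with $p\in(0,\alpha)$, so that $\int_{|\mathbf{z}|>1}|\mathbf{z}|^p\,\nu(\mathrm{d}\mathbf{z})<\infty$. Applying the generator \eqref{generator}, splitting the jump integral over $\{|\mathbf{z}|\le 1\}$ and $\{|\mathbf{z}|>1\}$, and invoking Assumption~\ref{a:Assumption} with $\mathbf{y}=\mathbf{0}$ to bound $\langle\mathbf{b}(\mathbf{x}),\nabla V(\mathbf{x})\rangle$, a direct computation should yield, for $|\mathbf{x}|$ sufficiently large,
\[
\mathcal{A}V(\mathbf{x}) \le -c\,|\mathbf{x}|^{\theta}\,V(\mathbf{x})+K.
\]

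When $\theta=0$ this reduces to $\mathcal{A}V\le -cV+K$, the classical geometric drift condition. Together with the small-set property above, the Hairer--Mattingly version of Harris's theorem then delivers
\[
\|\mathcal{L}(\mathbf{X}^{\mathbf{x}}_t)-\mu\|_{\mathrm{TV}}\le C\,V(\mathbf{x})\,\mathrm{e}^{-c_2 t},
\]
which is precisely statement~(i) with $c_1(\mathbf{x})=C V(\mathbf{x})$. When $\theta>0$ the bound is stronger and, together with Gr\"onwall's inequality, should yield $\sup_{\mathbf{x}\in\R^d}\E\,V(\mathbf{X}^{\mathbf{x}}_{t_0})<\infty$: the superlinear dissipation drags every starting point back into a common compact set in bounded time. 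This uniform-in-$\mathbf{x}$ tightness, paired with the minorization of Step~1, amounts to a Doeblin condition, from which the uniform geometric decay in (ii) follows with constants independent of $\mathbf{x}$.

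The main obstacle is the first step: establishing strong Feller together with topological irreducibility for an SDE driven by $\alpha$-stable noise whose drift is merely locally Lipschitz and superlinearly dissipative requires a truncation/localisation argument combined with the strict positivity of the stable transition density. The uniform-in-$\mathbf{x}$ moment estimate needed for (ii) is also delicate because of the heavy-tailed jumps; one has to choose $p<\alpha$ carefully and argue that the contribution of large jumps is dominated by the strong dissipation. Once these two points are under control, the remainder is a routine application of the Meyn--Tweedie machinery, and in fact the authors short-circuit everything by invoking \cite[Theorem 1.1 and Proposition 1.5]{Luo2016RefinedBC}.
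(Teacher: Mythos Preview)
Your proposal is correct and, as you yourself note in the final sentence, the paper does not give a proof of this lemma at all: it is stated with the preamble ``By Luo et al.\ \cite[Theorem 1.1 and Proposition 1.5]{Luo2016RefinedBC}'' and no argument is supplied. Your sketch via the Meyn--Tweedie/Harris framework (Foster--Lyapunov drift with $V_\beta(\mathbf{x})=(1+|\mathbf{x}|^2)^{\beta/2}$, $\beta<\alpha$, plus a small-set/minorization condition coming from the strict positivity of the stable transition density) is exactly the machinery underlying the cited reference, and the drift inequality $\mathcal{A}V_\beta\le -C_1 V_\beta^{1+\theta/\beta}+C_2$ that you anticipate is precisely what the paper itself derives later as \eqref{Vbeta} in the proof of Lemma~\ref{LemmaErgodicity0}. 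So there is nothing to compare: your outline is the substance of the result the paper imports wholesale.
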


{We define a function $V_{\beta}(\mathbf{x}): \mathbb{R}^d \to \mathbb{R}$ as:}
\begin{equation}
	\label{LyapunovFunction}
	V_{\beta}(\mathbf{x})=(1+\vert \mathbf{x}\vert^{2})^{{\beta}/{2}},\quad  \mathbf{x}\in\mathbb{R}^{d},\quad  \beta \in[1,\alpha).
\end{equation}

By applying Lemma \ref{LemmaErgodicity1}, we obtain the following result, which is related to $V_\beta$-exponential ergodicity and the moments estimation of $\left(\mathbf X^{\mathbf{x}}_t\right)_{t \geqslant 0}$ in \eqref{e:GSDE}.

\begin{lemma} \label{LemmaErgodicity0}
Denote by $\left(\mathbf X_t^\mathbf{x}\right)_{t \geq 0}$ the solution to the SDE \eqref{e:GSDE} for the case $\theta\geq 0$. $\left(\mathbf X_t^\mathbf{x}\right)_{t \geq 0}$ admits a unique invariant probability measure $\mu$ such that, for $1 \leq \beta<\alpha$
\begin{align}\label{LemmaErgodicity21}
\sup _{|f| \leq V_\beta}\left|\mathbb{E}\left[f\left(\mathbf X_t^\mathbf{x}\right)\right]-\mu(f)\right| &\leq c_1 V_\beta(\mathbf{x}) \mathrm{e}^{-c_2 t}, \quad t>0,
\end{align}
for some constants $c_1, c_2>0$. In particular, there exists a constant $C>0$ such that
\begin{align}\label{EXtbeta}
\mathbb{E}\left|\mathbf X_t^\mathbf{x}\right|^\beta \leq C\left(1+|\mathbf{x}|^\beta\right), \quad t>0.
\end{align}
\end{lemma}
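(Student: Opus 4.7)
The plan is to establish a Foster--Lyapunov drift inequality of the form
\begin{equation*}
\mathcal{A} V_\beta(\mathbf{x}) \leq -\gamma V_\beta(\mathbf{x}) + K \mathbf{1}_{\{|\mathbf{x}|\leq R\}}, \qquad \mathbf{x}\in\mathbb{R}^d,
\end{equation*}
for some constants $\gamma,K,R>0$, and then to combine it with the total-variation exponential ergodicity from Lemma \ref{LemmaErgodicity1} via the standard continuous-time Meyn--Tweedie (Down--Meyn--Tweedie) machinery to upgrade to the $V_\beta$-exponential estimate \eqref{LemmaErgodicity21}. The moment bound \eqref{EXtbeta} will fall out of the same drift inequality.

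To derive the drift inequality, decompose $\mathcal{A}V_\beta$ according to \eqref{generator} into its deterministic and jump components. For the deterministic part, $\nabla V_\beta(\mathbf{x})=\beta\mathbf{x}(1+|\mathbf{x}|^2)^{(\beta-2)/2}$, so
\begin{equation*}
\left\langle\mathbf{b}(\mathbf{x}),\nabla V_\beta(\mathbf{x})\right\rangle = \beta(1+|\mathbf{x}|^2)^{(\beta-2)/2}\left\langle\mathbf{b}(\mathbf{x}),\mathbf{x}\right\rangle.
\end{equation*}
Taking $\mathbf{y}=\mathbf{0}$ in Assumption \ref{a:Assumption} and using that $\mathbf{b}(\mathbf{0})$ is a fixed vector, for $|\mathbf{x}|>L$ one gets $\left\langle\mathbf{b}(\mathbf{x}),\mathbf{x}\right\rangle\leq -\tfrac{K_2}{2}|\mathbf{x}|^{2+\theta}+C_0$, contributing a term of order $-c|\mathbf{x}|^{\beta+\theta}$ for large $|\mathbf{x}|$. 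For the jump part, a first/second-order Taylor expansion of $V_\beta$ combined with the scaling of the $\alpha$-stable L\'evy measure gives
\begin{equation*}
\left|\int_{\mathbb{R}_0^d}\left[V_\beta(\mathbf{x}+\mathbf{z})-V_\beta(\mathbf{x})-\langle\mathbf{z},\nabla V_\beta(\mathbf{x})\rangle\mathbf{1}_{\{|\mathbf{z}|\leq 1\}}\right]\nu(\dif\mathbf{z})\right|\leq C(1+|\mathbf{x}|)^{\beta-\alpha}+C',
\end{equation*}
where the small-jump part is controlled by $|\mathbf{z}|^2\|\nabla^2 V_\beta(\cdot)\|$ integrated against $\nu$, and the large-jump part by $\int_{|\mathbf{z}|>1}|\mathbf{z}|^\beta\nu(\dif\mathbf{z})<\infty$ (since $\beta<\alpha$) plus a remainder absorbed into $V_\beta$. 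Since $\beta+\theta>\beta-\alpha$ for every $\theta\geq 0$, the negative drift contribution dominates the jump contribution at infinity, and the asserted drift inequality follows.

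Given the drift inequality, \eqref{EXtbeta} follows by applying Dynkin's formula to $V_\beta$, localising with the stopping times $\tau_n=\inf\{t\geq 0:|\mathbf{X}_t^{\mathbf{x}}|>n\}$ and passing to the limit by Fatou's lemma, yielding $\mathbb{E}V_\beta(\mathbf{X}_t^\mathbf{x})\leq V_\beta(\mathbf{x})\eup^{-\gamma t}+K/\gamma$, which is equivalent to \eqref{EXtbeta}. For the $V_\beta$-ergodic estimate \eqref{LemmaErgodicity21}, the drift inequality makes sublevel sets of $V_\beta$ attracting on average, while Lemma \ref{LemmaErgodicity1} supplies the uniform-in-compact exponential decay in total variation that plays the role of a minorisation; Down--Meyn--Tweedie then delivers the claim. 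The main delicate point is the jump-integral bound: one must split the integration region at a scale depending on $|\mathbf{x}|$ (for instance $|\mathbf{z}|\leq |\mathbf{x}|/2$ versus $|\mathbf{z}|>|\mathbf{x}|/2$) to justify the Taylor expansion on each piece and to ensure that the resulting estimate is genuinely dominated by the dissipative drift uniformly in $\mathbf{x}$ for every $\theta\geq 0$ and every admissible $\beta<\alpha$.
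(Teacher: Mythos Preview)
Your strategy is essentially the paper's: establish a Foster--Lyapunov inequality $\mathcal{A}V_\beta\le -cV_\beta+C$ by bounding the drift and jump parts of $\mathcal{A}V_\beta$ separately, invoke Meyn--Tweedie for \eqref{LemmaErgodicity21}, and use It\^o/Dynkin plus Gr\"onwall for \eqref{EXtbeta}.

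Two small corrections. First, your claimed jump bound $C(1+|\mathbf{x}|)^{\beta-\alpha}+C'$ is not what one actually gets; the paper obtains $C+C|\mathbf{x}|^{\beta-1}$ (from $|\nabla V_\beta(\mathbf{y})|\le\beta|\mathbf{y}|^{\beta-1}$ on the large-jump region), and then absorbs $|\mathbf{x}|^{\beta-1}$ via Young's inequality. Your comparison ``$\beta+\theta>\beta-\alpha$'' is therefore against the wrong exponent, though the conclusion---that the jump term is $o(V_\beta)$ and the dissipative drift dominates---remains valid. Second, the $|\mathbf{x}|$-dependent splitting you flag as ``the main delicate point'' is unnecessary: because $\beta<2$, the Hessian satisfies $\|\nabla^2 V_\beta(\mathbf{x})\|_{\mathrm{HS}}\le\beta(3-\beta)\sqrt{d}$ uniformly in $\mathbf{x}$, so the fixed split at $|\mathbf{z}|=1$ already handles the small-jump integral by a constant.
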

\begin{proof}
    (i) By the definition of $V_{\beta}(\mathbf{x})$ in \eqref{LyapunovFunction}, it is easy to check that $V_\beta \in \mathcal{D}\left(\mathcal{A}\right)$. Recalling the generator $\mathcal{A}$ of the process $\mathbf X_t^\mathbf{x}$ from \eqref{generator}, we have that 
\begin{equation*}
	\begin{aligned}
		 \mathcal{A} V_{\beta}(\mathbf{x})  
		= &\langle \mathbf b(\mathbf{x}), \nabla V_{\beta}(\mathbf{x})\rangle + C_{d, \alpha} \int_{\mathbb{R}_0^d}\left( V_{\beta}(\mathbf{x}+\mathbf z) - V_{\beta}(\mathbf{x}) - \left\langle\nabla V_{\beta}(\mathbf{x}), \mathbf z\right\rangle  \mathbf{1}_{(0, 1)}(|\mathbf z|)  \right) \frac{\mathrm{d} \mathbf z}{|\mathbf z|^{\alpha+d}} \\
		=:& H_1 + H_2.  
\end{aligned}
\end{equation*}
Next, we estimate $H_1$ and $H_2$ respectively.

For $V_{\beta}(\mathbf{x})$, we have
$$
\nabla V_\beta(\mathbf{x})=\frac{\beta \mathbf{x}}{\left(1+|\mathbf{x}|^2\right)^{1-\frac{\beta}{2}}}, \quad \nabla^2 V_\beta(\mathbf{x})=\frac{\beta {\bf I}_d}{\left(1+|\mathbf{x}|^2\right)^{1-\frac{\beta}{2}}}+\frac{\beta(\beta-2) \mathbf{x} \mathbf{x}^{\mathrm{T}}}{\left(1+|\mathbf{x}|^2\right)^{2-\frac{\beta}{2}}},
$$
where ${\bf I}_d$ denotes the $d \times d$ identity matrix. We see that for any $\mathbf{x} \in \mathbb{R}^d$,
\begin{align}\label{nablaV}
\left|\nabla V_\beta(\mathbf{x})\right| \leq \beta|\mathbf{x}|^{\beta-1}, \quad\left\|\nabla^2 V_\beta(\mathbf{x})\right\|_{\mathrm{HS}} \leq \beta(3-\beta) \sqrt{d} .
\end{align}

For the term $H_1$, we have
\begin{equation} \label{e:I}
	H_1 = \left\langle \mathbf b(\mathbf{x}), \nabla V_\beta(\mathbf{x})\right\rangle  =\frac{\beta}{\left(1+|\mathbf{x}|^2\right)^{\frac{2-\beta}{2}}}\langle \mathbf b(\mathbf{x})-\mathbf b(\mathbf{0}), \mathbf{x}\rangle + \frac{\beta}{\left(1+|\mathbf{x}|^2\right)^{\frac{2-\beta}{2}}}\langle \mathbf b(\mathbf{0}), \mathbf{x}\rangle. 
\end{equation}
For the first term of \eqref{e:I}. By Assumption \ref{a:Assumption} and the fact that $|\mathbf{x}|^{2+\theta}\geq (\frac{1+|\mathbf{x}|^2}{2})^{1+\frac{\theta}{2}}$ for $|\mathbf{x}|\geq 1$, we have
\begin{align*}
&\mathrel{\phantom{=}} \frac{\beta}{\left(1+|\mathbf{x}|^2\right)^{\frac{2-\beta}{2}}}\langle \mathbf b(\mathbf{x})-\mathbf b(\mathbf{0}), \mathbf{x}\rangle \\
& \leq \frac{\beta|\mathbf{x}|^2}{\left(1+|\mathbf{x}|^2\right)^{\frac{2-\beta}{2}}}(-K_2|\mathbf{x}|^{\theta}\mathbf 1_{\{|\mathbf{x}|>L\}}+K_1 \mathbf 1_{\{|\mathbf{x}|\leq L\}}) \\
& \leq -K_2\frac{\beta|\mathbf{x}|^{2+\theta}}{\left(1+|\mathbf{x}|^2\right)^{\frac{2-\beta}{2}}} \mathbf 1_{\{|\mathbf{x}|> 1\}}+\beta(K_2L^{2+\theta}+K_1L^2) \\
& \leq -2^{-(1+\frac{\theta}{2})} K_2 \beta {\left(1+|\mathbf{x}|^2\right)^{\frac{\beta+\theta}{2}}} \mathbf 1_{\{|\mathbf{x}|> 1\}}+\beta(K_2L^{2+\theta}+K_1L^2) \\
& \leq -2^{-(1+\frac{\theta}{2})}K_2 \beta {\left(1+|\mathbf{x}|^2\right)^{\frac{\beta+\theta}{2}}}+\beta\left( K_2  { 2 ^{\frac{\beta}{2}-1}}+K_2L^{2+\theta}+K_1L^2\right).
\end{align*}
For the second term of \eqref{e:I}. By Young's inequality, it holds that
\begin{align*}
	&\frac{\beta}{\left(1+|\mathbf{x}|^2\right)^{\frac{2-\beta}{2}}}\langle \mathbf b(\mathbf{0}), \mathbf{x}\rangle 
	\leq \beta|\mathbf b(\mathbf{0})||\mathbf{x}|^{\beta-1}\\
    \leq& |\mathbf b(\mathbf{0})|^\beta+(\beta-1)V_\beta(\mathbf{x})\\
\leq& |\mathbf b(\mathbf{0})|^\beta+( \beta-1)\left[2^{-(1+\frac{\theta}{2})}K_2V_\beta(\mathbf{x})^{1+\frac{\theta}{\beta}}+  2^{\beta(\frac{1}{\theta}+\frac{1}{2})} K_2^{-\frac{\beta}{\theta}}\right].
\end{align*}
Thus, substituting above two inequalities into \eqref{e:I} leads to
\begin{equation}
	\label{e:bound_I}
	\begin{aligned}
		H_1 \leq-2^{-(1+\frac{\theta}{2})}K_2  V_\beta(\mathbf{x})^{1+\frac{\theta}{\beta}}+C.
	\end{aligned} 
\end{equation}

For the part $H_2$, we notice that
\begin{equation}\label{e:decomp_J}
	\begin{aligned}
		&\mathrel{\phantom{=}}
		\int_{\mathbb{R}_0^d}\left(V_\beta(\mathbf{x}+\mathbf z)-V_\beta(\mathbf{x})-\left\langle\nabla V_\beta(\mathbf{x}), \mathbf z\right\rangle \mathbf{1}_{(0, 1)}(|\mathbf z|)\right) \frac{\mathrm{d} \mathbf z}{|\mathbf z|^{\alpha+d}} \\
		&= \int_{|\mathbf z|\leq 1} \int_0^1 \int_0^r\left\langle\nabla^2 V_\beta(\mathbf{x}+s \mathbf z), \mathbf z \mathbf z^{\mathrm{T}}\right\rangle_{\mathrm{HS}} \mathrm{d} s \mathrm{d} r \frac{\mathrm{d} \mathbf z}{|\mathbf z|^{\alpha+d}} \\
		&\quad
		 + \int_{|\mathbf z| > 1} \int_0^1\left\langle\nabla V_\beta(\mathbf{x}+r \mathbf z), \mathbf z\right\rangle \mathrm{d} r \frac{\mathrm{ d} \mathbf z}{|\mathbf z|^{\alpha+d}} \\
		 &=: H_{21} + H_{22}.
	\end{aligned}
\end{equation}
By \eqref{nablaV}, we have
\begin{equation}
	\label{e:bound_J1}
	\begin{aligned}
		H_{21}
		&\leq \frac{\beta(3-\beta)}{2} \sqrt{d} \int_{|\mathbf z|\leq 1} \frac{|\mathbf z|^2}{|\mathbf z|^{\alpha+d}} \mathrm{ d} \mathbf z
		\leq \frac{\beta(3-\beta) \sqrt{d} \sigma_{d-1}}{2(2-\alpha)} ,
	\end{aligned}
\end{equation}
and
\begin{equation}
	\label{e:bound_J2}
	\begin{aligned}
		H_{22} 
		\leq \beta \int_{|\mathbf z| > 1} \frac{|\mathbf{x}|^{\beta-1}|\mathbf z|+|\mathbf z|^\beta}{|\mathbf z|^{\alpha+d}} \mathrm{d}\mathbf z 
		\leq \beta \sigma_{d-1}\left(\frac{|\mathbf{x}|^{\beta-1}}{\alpha-1}+\frac{1}{\alpha-\beta}\right),
	\end{aligned}
\end{equation}
where $\sigma_{d-1}=2 \pi^{\frac{d}{2}} / \Gamma\left(\frac{d}{2}\right)$ is the surface area of the unit sphere $\mathbb{S}^{d-1} \subset \mathbb{R}^d$. 

Combining \eqref{e:decomp_J}, \eqref{e:bound_J1} and \eqref{e:bound_J2}, we have
\begin{equation}\label{e:bound_J}
	\begin{aligned}
		H_2 =& C_{d, \alpha} \int_{\mathbb{R}_0^d}\left( V_{\beta}(\mathbf{x}+\mathbf z) - V_{\beta}(\mathbf{x}) - \left\langle\nabla V_{\beta}(\mathbf{x}), \mathbf z\right\rangle \mathbf{1}_{(0, 1)}(|\mathbf z|) \right) \frac{\mathrm{d} \mathbf z}{|\mathbf z|^{\alpha+d}} \\
		\leq & \frac{ C_{d, \alpha} \beta(3-\beta) \sqrt{d} \sigma_{d-1}}{2(2-\alpha)}
		+ C_{d, \alpha} \beta \sigma_{d-1}\left(\frac{|\mathbf{x}|^{\beta-1}}{\alpha-1}+\frac{1}{\alpha-\beta}\right) \\
        \leq&\frac{C_{d, \alpha} \beta(3-\beta) \sqrt{d} \sigma_{d-1}}{2(2-\alpha)}+\frac{C_{d, \alpha} \beta \sigma_{d-1}}{\alpha-\beta}+\left(\frac{C_{d, \alpha}\beta \sigma_{d-1}}{\alpha-1}\right)^\beta+ V_\beta(\mathbf{x})\\
\leq&\frac{C_{d, \alpha} \beta(3-\beta) \sqrt{d} \sigma_{d-1}}{2(2-\alpha)}+\frac{C_{d, \alpha} \beta \sigma_{d-1}}{\alpha-\beta}+\left(\frac{C_{d, \alpha} \beta\sigma_{d-1}}{\alpha-1}\right)^\beta\\
&+2^{\beta(\frac{2}{\theta}+\frac{1}{2})} K_2^{-\frac{\beta}{\theta}}+2^{-(2+\frac{\theta}{2})}K_2 V_\beta(\mathbf{x})^{1+\frac{\theta}{\beta}},
	\end{aligned}
\end{equation}
where the last two inequalities come from Young's inequality.

Hence, combining \eqref{e:bound_I}, \eqref{e:bound_J}, we have
\begin{align}\label{Vbeta}
\mathcal{A} V_\beta( \mathbf{x} ) \leq- C_1V_\beta(\mathbf{x})^{1+\frac{\theta}{\beta}}+C_2,
\end{align}
where $C_1=2^{-(2+\frac{\theta}{2})}K_2$.  
Since $V_{\beta}(\bf x)\geq 1$, we also have 
\begin{equation}
	\label{e:dif_V}
	\mathcal{A} V_\beta( \mathbf{x} )\leq- C_1V_\beta(\mathbf{x}) +C_2.
\end{equation}
By Lemma \ref{LemmaErgodicity1}, we know that $\left(\mathbf X^\mathbf{x}_t\right)_{t \geq 0}$ admits a unique invariant probability measure $\mu$ for any $\theta\geq 0$. Then, by \cite[Theorem 6.1]{meyn1993stability}, we obtain the first result \eqref{LemmaErgodicity21}, that is,
\begin{align*}
\sup _{|f| \leq V_\beta}\left|\mathbb{E}\left[f\left(\mathbf X_t^\mathbf{x}\right)\right]-\mu(f)\right| \leq c_1 V_\beta(\mathbf{x}) \mathrm{e}^{-c_2 t}.
\end{align*}

(ii) By using Itô's formula, one has 
$$\mathbb{E} V_\beta\left(\mathbf X^\mathbf{x}_t\right)=V_\beta(\mathbf{x})+\mathbb{E} \int_0^t \mathcal{A} V_\beta\left(\mathbf X_s^\mathbf{x}\right) \mathrm{d} s$$
which, together with \eqref{e:dif_V}, implies
\begin{align}\label{Vbeta1}
\frac{\mathrm{d}}{\mathrm{d} t} \mathbb{E} V_\beta\left(\mathbf X^\mathbf{x}_t\right) 
\leqslant-C_1 \mathbb{E}V_\beta\left(\mathbf X_t^\mathbf{x}\right) + C_2.
\end{align}
By the Gr\"onwall's inequality, we can obtain that
$$
\mathbb{E} V_\beta\left(\mathbf{X}_t^{\mathbf{x}}\right) \leqslant V_\beta(\mathbf{x}) \mathrm{e}^{-C_1 t}+C_2 / C_1.
$$
Due to $|\mathbf{x}|^\beta \leqslant V_\beta(\mathbf{x}) \leq C(1+{|\mathbf{x}|}^{\beta})$ for all $\mathbf{x} \in \mathbb{R}^d$, we have the desired result \eqref{EXtbeta}. 
\end{proof}

Moreover, the following $L^1$-Wasserstein distance for the SDE \eqref{e:GSDE} can be established. For further details, refer to \cite[Theorem 1.2]{wang2016p}. 
\begin{lemma}[$L^1$-Wasserstein distance for SDE for the case $\theta\geq 0$]{\label{contract}}
Denote by $\left(\mathbf X_t\right)_{t \geq 0}$ the solution to the SDE \eqref{e:GSDE} for the case $\theta\geq 0$. Then, there exist constants $\lambda, C>0$ such that 

(i) If $\theta=0$, for every $t>0$, and $\mathbf{x}, \mathbf y \in \mathbb{R}^d $,  
\begin{align*}
\mathbb W_1\left(\mathcal{L}\left(\mathbf X_t^\mathbf{x}\right), \mathcal{L}\left(\mathbf X_t^\mathbf y\right)\right) \leq C \mathrm{e}^{-\lambda t} |\mathbf{x}-\mathbf y|.
\end{align*}

(ii) If $\theta>0$, for every $t>0$, and $\mathbf{x}, \mathbf y \in \mathbb{R}^d $, 
\begin{align*}
\mathbb W_1\left(\mathcal{L}\left(\mathbf X_t^\mathbf{x}\right), \mathcal{L}\left(\mathbf X_t^\mathbf y\right)\right) \leq C \mathrm{e}^{-\lambda t}\left[\frac{|\mathbf{x}-\mathbf y|}{1+|\mathbf{x}-\mathbf y| \mathbf{1}_{(1, \infty)}(t)}\right].
\end{align*}
\end{lemma}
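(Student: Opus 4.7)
The plan is to follow the coupling argument of Wang~\cite{wang2016p}, whose Theorem~1.2 is tailored to the class of SDEs satisfying Assumption~\ref{a:Assumption}; our task reduces to confirming that framework applies and sketching the mechanics. The backbone is: (a) build a refined basic coupling $(\mathbf{X}_t^{\mathbf{x}}, \mathbf{X}_t^{\mathbf{y}})$ driven by the same $\alpha$-stable noise, mixing reflection on the small-jump part with synchronisation on the large-jump part; (b) apply It\^o's formula to $f(r_t)$, where $r_t := |\mathbf{X}_t^{\mathbf{x}} - \mathbf{X}_t^{\mathbf{y}}|$ and $f\colon [0,\infty) \to [0,\infty)$ is a concave $C^2$ function with $f(0)=0$ and $f(r) \asymp r$ globally; (c) verify that the generator of the coupled process applied to $f(r)$ is bounded above by $-\lambda f(r)$ for some $\lambda > 0$, and conclude via Gr\"onwall's inequality together with the Kantorovich duality \eqref{Kantorovich}.

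More concretely, I would split the L\'evy measure $\nu$ at some threshold $|\mathbf{z}| = L_0$: on $\{|\mathbf{z}| \le L_0\}$ the two copies use jumps that are mirror reflections across the hyperplane orthogonal to $\mathbf{Y}_t := \mathbf{X}_t^{\mathbf{x}} - \mathbf{X}_t^{\mathbf{y}}$, while on $\{|\mathbf{z}| > L_0\}$ the two jumps coincide; rotational symmetry of $\nu$ ensures each marginal remains $\alpha$-stable. Under this coupling, the drift contribution to $\mathrm{d} f(r_t)$ reduces, up to a factor $f'(r_t)/r_t$, to $\langle \mathbf{b}(\mathbf{X}_t^{\mathbf{x}}) - \mathbf{b}(\mathbf{X}_t^{\mathbf{y}}), \mathbf{Y}_t\rangle$, which by Assumption~\ref{a:Assumption} is bounded by $K_1 r_t^2$ for $r_t \le L$ and by $-K_2 r_t^{2+\theta}$ for $r_t > L$. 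The reflected small-jump part, after change of variable and using concavity of $f$ together with the $\alpha$-stable symmetry, produces a strictly negative non-local term of order $-c f(r_t)$ uniformly on $r_t > 0$; here $\alpha \in (1,2)$ is crucial so that the small-jump intensity dominates. Wang's choice of $f$ is tuned so that the locally bad $K_1 r_t^2$ piece on $\{r_t \le L\}$ is dominated by this negative contribution, giving the desired $-\lambda f(r_t)$ upper bound. Gr\"onwall together with $f(r) \asymp r$ then delivers part~(i).

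For part~(ii), the extra super-linear dissipativity $-K_2 r^{2+\theta}$ at large separations forces the ODE comparison $\dot r \le -K_2 r^{1+\theta}$ on $\{r > L\}$, whose solution satisfies $r(t) \le (\theta K_2 t)^{-1/\theta}$ independently of $r(0)$. Hence at $t = 1$ the distance $r_1$ is bounded by a constant regardless of $|\mathbf{x} - \mathbf{y}|$; combining this short-time collapse with the exponential contraction from part~(i) applied on $[1, t]$ produces the extra factor $1/(1 + |\mathbf{x} - \mathbf{y}|\mathbf{1}_{(1,\infty)}(t))$. For $t \le 1$ the elementary pathwise bound $r_t \le r_0 \mathrm{e}^{K_1 t}$, obtained by integrating the one-sided estimate in Assumption~\ref{a:Assumption}, furnishes the companion Lipschitz factor $|\mathbf{x} - \mathbf{y}|$. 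The main technical obstacle is step~(c): constructing $f$ and verifying that the reflected small-jump integral gives the uniformly negative $-c f(r)$ term on $(0,\infty)$. This is a delicate $\alpha$-stable estimate which forms the technical heart of \cite{wang2016p}, and invoking his theorem imports it wholesale.
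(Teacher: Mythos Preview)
Your proposal is correct and coincides with the paper's own treatment: the paper does not prove this lemma at all but simply records it as a direct consequence of \cite[Theorem~1.2]{wang2016p}, exactly the reference you invoke. Your sketch of the refined basic coupling and the Lyapunov/comparison mechanics behind Wang's result is accurate and goes beyond what the paper provides.
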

Lemmas \ref{LemmaErgodicity0} and \ref{contract} imply the uniform ergodicity for the SDE \eqref{e:GSDE} with $\theta>0$ under $L^1$-Wasserstein distance, i.e.
\begin{lemma}[Uniform ergodicity for the SDE \eqref{e:GSDE} with $\theta>0$] \label{LemmaErgodicity2}
Denote by $\left(\mathbf X_t\right)_{t \geq 0}$ the solution to the SDE \eqref{e:GSDE} for the case $\theta> 0$.  
Then, there exists a constant $C>0$ independent of initial point $\mathbf{x}$, such that for $1 \leq \beta<\alpha$ and $\mathbf{x} \in \mathbb{R}^d$,  
\begin{align}\label{e:Lem2.4_1}
\sup _{t\geq 1}\mathbb{E}V_{\beta}(\mathbf X_t^\mathbf{x}) \leq C,
\end{align}
and
\begin{align}\label{e:Lem2.4_2}
	\int_0^1\mathbb{E}V_{\beta}(\mathbf X_t^\mathbf{x}) \mathrm{d} t \leq C\left[(1+|\mathbf{x}|^2)^{\frac{\beta- \theta }{2}}+1\right].
\end{align}
Moreover, $\left(\mathbf X_t\right)_{t \geq 0}$ admits a unique invariant probability measure $\mu$ such that, $\mu(V_\beta)<\infty$, and for some constants $c_1, c_2>0$,
\begin{align}\label{LemmaErgodicity22}
\mathbb W_1\left(\mathcal{L}\left(\mathbf X_t^\mathbf{x}\right), \mu\right) &\leq c_1 \mathrm{e}^{-c_2 t}, \quad t> 1.
\end{align}

\end{lemma}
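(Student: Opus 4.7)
All three conclusions stem from the super-linear Lyapunov estimate \eqref{Vbeta} proved inside Lemma \ref{LemmaErgodicity0}, namely $\mathcal{A} V_\beta(\mathbf{x}) \leq - C_1 V_\beta(\mathbf{x})^{1 + \theta/\beta} + C_2$ (valid for $\theta > 0$). Applying It\^o's formula together with Jensen's inequality for the convex map $u \mapsto u^{1+\theta/\beta}$, and writing $y(t) = \mathbb{E} V_\beta(\mathbf{X}_t^\mathbf{x})$, $\eta = \theta/\beta > 0$, one obtains the scalar differential inequality $y'(t) \leq - C_1\, y(t)^{1+\eta} + C_2$. Set $z_\star := (2 C_2 / C_1)^{1/(1+\eta)}$. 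On the region $\{y \geq z_\star\}$ the constant term is dominated, and comparison with the explicitly solvable autonomous ODE $z' = -\tfrac{C_1}{2}\, z^{1+\eta}$ gives the pointwise bound
\begin{align*}
	y(t) \;\leq\; z_\star + \bigl(y(0)^{-\eta} + \tfrac{C_1 \eta}{2}\, t\bigr)^{-1/\eta}, \qquad t \geq 0.
\end{align*}
The second term is at most $(\tfrac{C_1 \eta}{2}\, t)^{-1/\eta}$, \emph{independently of} $y(0) = V_\beta(\mathbf{x})$, so specialising to $t \geq 1$ immediately delivers \eqref{e:Lem2.4_1}.

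For \eqref{e:Lem2.4_2} I would integrate the same pointwise bound over $[0,1]$. The substitution $u = y(0)^{-\eta} + \tfrac{C_1 \eta}{2}\, t$ reduces the computation to an elementary $\int u^{-1/\eta}\, du$, and three cases arise. When $\eta < 1$ (i.e.\ $\theta < \beta$) the dominant boundary term is proportional to $y(0)^{1-\eta} = V_\beta(\mathbf{x})^{(\beta-\theta)/\beta}$, which is comparable to $(1+|\mathbf{x}|^2)^{(\beta-\theta)/2}$. When $\eta > 1$ the integral is bounded by a pure constant, and in that case $(1+|\mathbf{x}|^2)^{(\beta-\theta)/2} \leq 1$, so the stated bound still holds. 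The borderline $\eta = 1$ produces a logarithm, which is absorbed into a constant since then $(\beta - \theta)/2 = 0$.

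Part (iii) splits into three pieces. Existence, uniqueness and total-variation ergodicity of $\mu$ are already supplied by Lemma \ref{LemmaErgodicity1}. The moment bound $\mu(V_\beta) < \infty$ follows by truncation: for each $N$, $V_\beta \wedge N$ is bounded, so TV convergence yields $\mu(V_\beta \wedge N) = \lim_{t \to \infty} \mathbb{E}\bigl[(V_\beta \wedge N)(\mathbf{X}_t^\mathbf{x})\bigr] \leq \sup_{t \geq 1} \mathbb{E} V_\beta(\mathbf{X}_t^\mathbf{x}) \leq C$, and monotone convergence in $N$ concludes. For \eqref{LemmaErgodicity22} I would combine invariance of $\mu$ with Kantorovich duality \eqref{Kantorovich}: for any $f \in \mathrm{Lip}(1)$,
\begin{align*}
	\bigl|\mathrm{P}_t f(\mathbf{x}) - \mu(f)\bigr|
	= \Bigl|\int_{\R^d} \bigl[\mathrm{P}_t f(\mathbf{x}) - \mathrm{P}_t f(\mathbf{y})\bigr]\, \mu(d\mathbf{y})\Bigr|
	\leq \int_{\R^d} \mathbb{W}_1\bigl(\mathcal{L}(\mathbf{X}_t^\mathbf{x}), \mathcal{L}(\mathbf{X}_t^\mathbf{y})\bigr)\, \mu(d\mathbf{y}),
\end{align*}
and Lemma \ref{contract}(ii) bounds the integrand by $C e^{-\lambda t}\cdot |\mathbf{x}-\mathbf{y}|/(1 + |\mathbf{x}-\mathbf{y}|) \leq C e^{-\lambda t}$ for $t > 1$, uniformly in $\mathbf{y}$. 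I anticipate the main obstacle to be \eqref{e:Lem2.4_2}: although the ODE comparison itself is short, the short-time integral has to be split across the three regimes $\eta < 1$, $\eta = 1$, $\eta > 1$, and one must verify carefully that the ``worst'' regime aligns with the polynomial factor $(1 + |\mathbf{x}|^2)^{(\beta-\theta)/2}$ appearing in the statement.
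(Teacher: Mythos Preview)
Your strategy coincides with the paper's: both start from the super-linear Lyapunov bound \eqref{Vbeta}, pass to the scalar inequality $y' \le -C_1 y^{1+\eta}+C_2$ via It\^o plus Jensen, compare with the explicit solution of $z'=-\tfrac{C_1}{2}z^{1+\eta}$ on the set where $y\ge z_\star$, and then integrate over $[0,1]$; part (iii) is verbatim the paper's Kantorovich-plus-Lemma~\ref{contract}(ii) argument. Your single closed-form comparison bound $y(t)\le z_\star+(y(0)^{-\eta}+\tfrac{C_1\eta}{2}t)^{-1/\eta}$ is in fact a bit tidier than the paper's case split on $\bar g(0)$ and the hitting time $\tau$, but the content is the same.

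One slip to flag: at the borderline $\eta=1$ (i.e.\ $\theta=\beta$) your integral is
\[
\int_0^1\bigl(y(0)^{-1}+\tfrac{C_1}{2}t\bigr)^{-1}\,\mathrm{d}t=\tfrac{2}{C_1}\log\!\bigl(1+\tfrac{C_1}{2}y(0)\bigr)\asymp \log(1+|\mathbf{x}|^2),
\]
which is \emph{not} absorbed by the constant $C[(1+|\mathbf{x}|^2)^0+1]=2C$ as you claim. The paper's own computation has the same defect at $\beta=\theta$. In practice this is harmless, since $\beta\in[1,\alpha)$ is a free parameter and one simply avoids $\beta=\theta$ (and in any case a logarithm is dominated by $(1+|\mathbf{x}|^2)^\varepsilon$ for any $\varepsilon>0$), but the sentence ``absorbed into a constant'' should be amended.
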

\begin{proof}
(i)  Applying \eqref{Vbeta}, we can have the following differential inequality analogous to \eqref{Vbeta1},
\begin{equation}\label{Vbeta2}
	\frac{\mathrm{d}}{\mathrm{d} t} \mathbb{E} V_\beta\left(\mathbf X^\mathbf{x}_t\right) \leqslant-C_1 \mathbb{E}V_\beta\left(\mathbf X^\mathbf{x}_t\right)^{1+\frac{\theta}{\beta}}+C_2 \leqslant-C_1\big[\mathbb{E}V_\beta\left(\mathbf X_t^\mathbf{x}\right) \big]^{1+\frac{\theta}{\beta}}+C_2,
\end{equation}
where the last inequality uses the Jensen's inequality. For simplicity, we let $g(t) = \E  V_\beta(\mathbf X^\mathbf{x}_t)$. It follows that $g(t) \geqslant 1$, and \eqref{Vbeta2} can be expressed as  
$$
g^{\prime}(t) \leqslant-C_1 g^{1+\frac{\theta}{\beta}}(t)+C_2\leqslant-C_1 (g(t)-1)^{1+\frac{\theta}{\beta}}+C_2.
$$
By the comparison theorem, we obtain that
\begin{align}\label{barg}
g(t) -1\leqslant \bar g(t),
\end{align}
where the function $\bar g$ solves the following equation:
$$
\bar g^{\prime}(t) =-C_1 \bar g^{1+\frac{\theta}{\beta}}(t)+C_2,\quad \text{with } \ \bar g(0)=g(0)-1.
$$
Then we know $\bar g(t)  \geqslant 0$. Due to ${\theta}/{\beta}>0$, it is easy to check that $\bar g$ is monotone and 
	\begin{equation}
		\label{e:comp_lim}
		\lim_{t \to \infty} \bar g(t) = \left( {C_2}/{C_1} \right)^{1/(1+\frac{\theta}{\beta})}.
	\end{equation}

We shall divide the proof of \eqref{e:Lem2.4_1} in the following two cases:
\begin{itemize}
	\item[(a)] For the case that $0\leq \bar g(0) \leq  ( C_2 / C_1 )^{1/(1+\frac{\theta}{\beta})}$, $\bar g$ is increasing in $[0, \infty)$. Then, \eqref{barg} and \eqref{e:comp_lim}  imply that $ g(t) \leqslant \bar g(t)+1\leq  ( C_2 / C_1  )^{1/(1+\frac{\theta}{\beta})}+1$ for all $t \geq 0$.
	\item[(b)] For the case that $\bar g(0) >  ( C_2 / C_1 )^{1/(1+\frac{\theta}{\beta})}$, $\bar g$ is decreasing in $[0, \infty)$. Let
	\begin{align}\label{tau} 
		\tau := \inf \left\{ t: \bar g(t) \leqslant ( 2 C_2 / C_1 )^{1/(1+\frac{\theta}{\beta})} \right\}  . 
	\end{align}
	
	If $\tau \leq 1$, there exists a constant $t_0 \leq 1$ such that $\bar g(t_0) \leq \big( 2 C_2 / C_1 \big)^{1/(1+\frac{\theta}{\beta})}$. By the monotone of $\bar g$, it holds that 
	$$\bar g(t) \leq \left( {2 C_2}/{ C_1 } \right)^{1/(1+\frac{\theta}{\beta})}, \quad \text{for all}\ t \geq t_0.$$
	
	If $\tau > 1$, then $\bar g(t) > \big( 2 C_2 / C_1 \big)^{1/(1+\frac{\theta}{\beta})}$ for any $t\leqslant 1$, and the following differential inequality holds for all $t\leq 1$, 
	$$
	\frac{\bar g^{\prime}(t)}{\bar g^{1+\frac{\theta}{\beta}}(t)} =-C_1+\frac{C_2}{\bar g(t)^{1+\frac{\theta}{\beta}}} \leqslant-C_1+\frac{C_2}{ (( {2C_2}/{C_1})^{1/(1+\frac{\theta}{\beta})} )^{1+\frac{\theta}{\beta}}}\leq -\frac{C_1}2,
	$$
	 which leads to
	\[
		-\frac{\beta}{\theta}\left(\bar g^{-\frac{\theta}{\beta}}(t)\right)^{\prime} \leq-\frac{1}{2} C_1,\quad \text{for all}\ t\leq 1.
	\]	
	Then we know that 
	\begin{equation}\label{gttt3}
		-\frac{\beta}{\theta} \bar g^{-\frac{\theta}{\beta}}(t)+\frac{\beta}{\theta} \bar g^{-\frac{\theta}{\beta}}(0) \leq-\frac{1}{2} C_1 t,\quad \text{for all}\ t\leq 1.
	\end{equation}
	 
	By the comparison's theorem and the monotone of $\bar g$, we can obtain that for any $t\geqslant 1$,
	\begin{align*}
		g(t) \leqslant \bar g(t)+1 \leqslant \bar g(1) +1\leq \left[ \bar g^{-\frac{\theta}{\beta}}(0) +  \frac{\theta C_1}{2\beta} \right]^{-\beta/\theta}+1 \leq \left[  \frac{\theta C_1}{2\beta} \right]^{-\beta/\theta}+1 .
	\end{align*}
\end{itemize}

	Combining above all, there exists a positive constant $C$ independent of $\mathbf{x}$ such that
\begin{align*}
\sup _{t \geqslant 1} \mathbb{E}\left[V_\beta\left(\mathbf X^\mathbf{x}_t\right)\right] \leqslant C, \quad \forall \ \mathbf{x} \in \mathbb{R}^d.
\end{align*}

(ii) Next, we provide the proof of \eqref{e:Lem2.4_2}, which is also divided in two part.
\begin{itemize}
	\item[(a)] For the case that $0\leq \bar g(0) \leq ( C_2 / C_1 )^{1/(1+\frac{\theta}{\beta})}$, 
	it is easy to know that
	\begin{align}\label{gtt}
		\int_0^1  g(t) \mathrm{d} t\leq \int_0^1 \bar g(t) \mathrm{d} t+1\leq ( C_2 / C_1  )^{1/(1+\frac{\theta}{\beta})}+1.
	\end{align}
	\item[(b)] For the case that $\bar g(0) > ( C_2 / C_1 )^{1/(1+\frac{\theta}{\beta})}$, let $\tau$ be defined in \eqref{tau}.
	When $\tau \in(0,1)$ and $t \in(\tau, 1]$, since $\bar g^{\prime}(t) \leqslant C_2$, we have 
	 \begin{align}\label{gt2} 
			g(t) &\leq\bar g(t)+1 \leq C_2(1-\tau)+ ( 2 C_2 / C_1 )^{1/(1+\frac{\theta}{\beta})}+1\\
			&\leq ( 2 C_2 / C_1 )^{1/(1+\frac{\theta}{\beta})}+C_2+1 , \quad t \in(\tau, 1]. \nonumber
	\end{align} 
	When $t \in [0,\tau\wedge 1]$, $\bar g(t)>\left(2 C_2 / C_1\right)^{{1}/(1+\frac{\theta}{\beta}})$, by \eqref{gttt3}, we have
	\begin{equation}\label{gt3}  
		\begin{aligned}
			g(t) &\leqslant \bar{g}(t) + 1 \leq \left( {\bar g^{-\frac{\theta}{\beta}}(0)+\frac{1}{2} \frac{\theta}{\beta}C_1 t}\right)^{-\frac{\beta}{\theta}}+1 \\
			&\leq \left({(1+|\mathbf{x}|^2)^{-\frac{\theta}{2 }} +\frac{1}{2} \frac{\theta}{\beta} C_1 t}\right)^{-\frac{\beta}{\theta}}+1,\quad t \in [0,\tau\wedge 1].
		\end{aligned}
	\end{equation}
    Thus, for the case that $\bar g(0) > \left(C_2 / C_1\right)^{1 /\left(1+\frac{\theta}{\beta}\right)}$, by \eqref{gt2} and \eqref{gt3}, we have
\begin{align*}
\int_0^1 g(t) \mathrm{d} t
&\leq \int_0^{\tau\wedge 1} \left( {(1+|\mathbf{x}|^2)^{-\frac{\theta}{2 }} +\frac{1}{2}\frac{\theta}{\beta} C_1 t}\right)^{-\frac{\beta}{\theta}} \mathrm{d} t+\int_{\tau\wedge 1}^1 (( 2 C_2 / C_1 )^{1/(1+\frac{\theta}{\beta})}+C_2)\mathrm{d} t + 2 \\
&\leq \int_0^{\tau\wedge 1} \left( {(1+|\mathbf{x}|^2)^{-\frac{\theta}{2 }} +\frac{1}{2} \frac{\theta}{\beta} C_1 t}\right)^{-\frac{\beta}{\theta}} \mathrm{d} t+C\\
&\leq \int_0^{(1+|\mathbf{x}|^2)^{-\frac{\theta}{2}}} \left( {(1+|\mathbf{x}|^2)^{-\frac{\theta}{2 }} }\right)^{\frac{\beta}{\theta}} \mathrm{d} t+\int_{(1+|\mathbf{x}|^2)^{-\frac{\theta}{2}}}^{1} \left( {\frac{\theta C_1}{2\beta} t}\right)^{-\frac{\beta}{\theta}} \mathrm{d} t+C\\
&\leq C(1+|\mathbf{x}|^2)^{\frac{\beta- \theta }{2}}+C.
\end{align*}
\end{itemize}
Combining both of the above cases, we can get the desired result \eqref{e:Lem2.4_2} .

(iii) For $f\in \mathrm{Lip}(1)$, $\mu(f)<C\mu(V_\beta)<\infty$ immediately follows from \eqref{e:Lem2.4_1}. Lemma \ref{contract} (ii) implies that, for $t> 1$,
\begin{align*}
\mathbb W_1\left(\mathcal{L}\left(\mathbf X_t^\mathbf{x}\right), \mu\right)&=\sup _{f\in \mathrm{Lip}(1)}\left|\mathrm{P}_tf(\mathbf{x})-\mu(f)\right|\\
&=\sup _{f\in \mathrm{Lip}(1)}\left|\mathrm{P}_tf(\mathbf{x})-\int_{\mathbb R^d} \mathrm{P}_tf(\mathbf y)\mu(\mathrm d \mathbf y)\right|\\
&\leq \sup _{f\in \mathrm{Lip}(1)}\int_{\mathbb R^d}\left|\mathrm{P}_tf(\mathbf{x})- \mathrm{P}_tf(\mathbf y)\right|\mu(\mathrm d \mathbf y)\\
&=\int_{\mathbb R^d}\mathbb W_1\left(\mathcal{L}\left(\mathbf X_t^\mathbf{x}\right), \mathcal{L}\left(\mathbf X_t^\mathbf y\right)\right)\mu(\mathrm d \mathbf y) \leq c_1 \mathrm{e}^{-c_2 t}.
\end{align*}

The proof is complete.
\end{proof}

\section{The proof of Main Results}{\label{ProofResults}}

\subsection{The proof of Theorem \ref{thm:Limit1}}{\label{ProofResults1}}

 Consider the Poisson equation \eqref{Poisson} with a bounded measurable test function, that is: for $h \in \mathcal{B}_b(\mathbb{R}^d, \mathbb{R})$,
\begin{equation}
	\label{e:b_Poi}
	\mathcal{A} f_{h}({\bf x}) = h(\bf x) - \mu(h),
\end{equation}
where $\mathcal{A}$ is defined by \eqref{generator} and $\mu$ is the unique ergodic measure for SDE \eqref{e:GSDE}. 
We first analyze the regularity of the solution to \eqref{e:b_Poi}, as stated in the following lemma.  

\begin{lemma}\label{lem:reghf}
	Consider \eqref{e:b_Poi}. For function $h\in \mathcal{B}_b(\R^d,\R)$, one has,
	
	(i) A solution to \eqref{e:b_Poi} is given by
	\begin{eqnarray} {\label{Lip2}}
		f_h(\mathbf{x})= - \int_0^{\infty} \mathrm{P}_t[   h(\mathbf{x}) - \mu(h) ] \dif t,
	\end{eqnarray}
	which implies that for some $C>0$, and $\beta<\alpha/4$,
	\begin{align}\label{e:reghf0}
	    \left|f_h(\mathbf{x})\right| &\leqslant C\left(1+|\mathbf{x}|^{ 2}\right)^{ \beta/2}. 
	\end{align}
     
	(ii) The solution to \eqref{e:b_Poi} is also given by
	\begin{equation} \label{e:steinequs3}
		f_h(\mathbf{x})=  \int_0^{\infty} {\rm e}^{-a t} \mathrm{P}_t[  a f_h(\mathbf{x}) - h(\mathbf{x}) + \mu(h) ] \dif t,  \quad  \forall  a>0,
	\end{equation}
	which implies that  for some $C>0$, and $\beta<\alpha/4$,
	\begin{align}\label{e:reghNf0}
    \left|\nabla f_h(\mathbf{x})\right| &\leqslant C\left(1+|\mathbf{x}|^2\right)^{ \beta/2}.
	\end{align}
\end{lemma}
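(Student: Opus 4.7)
For part (i), the plan is to take \eqref{Lip2} as the definition of $f_h$ and then check two things: first, that the integral converges absolutely with the claimed pointwise bound, and second, that it actually solves \eqref{e:b_Poi}. Both steps are immediate from Lemma \ref{LemmaErgodicity0}. Since $h$ is bounded, $|h - \mu(h)| \le 2\|h\|_\infty V_\beta$, so \eqref{LemmaErgodicity21} yields
\[
	\bigl| \mathrm{P}_t[h - \mu(h)](\mathbf{x}) \bigr| \le 2 c_1 \|h\|_\infty V_\beta(\mathbf{x})\, \mathrm{e}^{-c_2 t},
\]
and integrating in $t$ gives \eqref{e:reghf0}. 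To confirm that $\mathcal{A} f_h = h - \mu(h)$, I would differentiate under the integral via the semigroup identity $\frac{\mathrm{d}}{\mathrm{d}t} \mathrm{P}_t g = \mathrm{P}_t \mathcal{A} g$; the integral telescopes, and the boundary term at $t = \infty$ vanishes by the same ergodic estimate.

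The alternative representation \eqref{e:steinequs3} is purely algebraic: rewriting the Poisson equation as $(a - \mathcal{A}) f_h = a f_h - h + \mu(h)$ and using $\frac{\mathrm{d}}{\mathrm{d}t}\bigl[ \mathrm{e}^{-a t} \mathrm{P}_t f_h \bigr] = -\mathrm{e}^{-a t}\mathrm{P}_t(a - \mathcal{A}) f_h$, the integral on the right-hand side of \eqref{e:steinequs3} telescopes to $f_h(\mathbf{x}) - \lim_{t \to \infty} \mathrm{e}^{-a t} \mathrm{P}_t f_h(\mathbf{x})$, and this limit vanishes for $a$ sufficiently large because $|\mathrm{P}_t f_h(\mathbf{x})| \le C\, \mathrm{P}_t V_\beta(\mathbf{x}) \le C( V_\beta(\mathbf{x}) + 1 )$ by \eqref{e:reghf0} and \eqref{EXtbeta}.

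The gradient estimate \eqref{e:reghNf0} is the main obstacle, and the reason for invoking the alternative representation: rather than differentiating \eqref{Lip2} (which would require controlling $\nabla \mathrm{P}_t h$ uniformly in $t$), I can differentiate \eqref{e:steinequs3}, so the gradient appears under a resolvent-like average
\[
	\nabla f_h(\mathbf{x}) = \int_0^\infty \mathrm{e}^{-a t}\, \nabla \mathrm{P}_t[a f_h - h + \mu(h)](\mathbf{x})\, \mathrm{d}t.
\]
I would split this integral at $t = 1$. For $t \ge 1$, I would apply the $L^1$-Wasserstein contraction from Lemma \ref{contract}, combined with the fact that $\mathrm{P}_1$ smooths the bounded integrand into a Lipschitz function, to obtain exponential decay in $t$. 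For $t \in (0, 1)$, one needs a short-time regularization estimate of the form $|\nabla \mathrm{P}_t g(\mathbf{x})| \lesssim t^{-1/\alpha} \|g\|_\infty$ for the drifted stable semigroup, which is $t$-integrable near $0$ since $\alpha > 1$. Combining the two regimes and using the $V_\beta$ bound from part (i) to control $a f_h$, the claimed bound $C(1 + |\mathbf{x}|^2)^{\beta/2}$ follows by choosing $a$ large enough to dominate any polynomial loss in $\mathbf{x}$ coming from the short-time regularization applied to $a f_h$.

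The principal technical difficulty is precisely this short-time gradient bound on the drifted stable semigroup: the $L^1$-Wasserstein contraction is not enough because the effective test function $a f_h - h + \mu(h)$ is only bounded (times $V_\beta$), not Lipschitz. I expect to cite or re-derive a heat-kernel--type estimate tailored to Assumption \ref{a:Assumption}, and to track the $\mathbf{x}$-dependence carefully so that the final bound fits into the power $(1 + |\mathbf{x}|^2)^{\beta/2}$ with $\beta < \alpha/4$; the restriction on $\beta$ most likely reflects Young-inequality trade-offs between the $V_\beta$-growth of $f_h$ inherited from part (i) and the short-time regularization.
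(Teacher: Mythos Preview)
Your treatment of part (i) and of the resolvent identity \eqref{e:steinequs3} is essentially the paper's: the $V_\beta$-exponential ergodicity of Lemma~\ref{LemmaErgodicity0} gives the bound \eqref{e:reghf0}, and the paper derives \eqref{e:steinequs3} in the same algebraic way via $R_a=(a-\mathcal{A})^{-1}$ (it also uses $R_a$ and the closedness of $\mathcal{A}$, rather than direct differentiation, to verify that \eqref{Lip2} solves the Poisson equation).

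The genuine gap is in the gradient estimate. The paper splits into cases according to $\theta$, and this split is not cosmetic. When $\theta>0$, Lemma~\ref{LemmaErgodicity1}(ii) gives \emph{uniform} total-variation ergodicity, so $|f_h(\mathbf{x})|\le C\|h\|_\infty$ is actually \emph{bounded}; the short-time bound $|\nabla \mathrm{P}_t g|\le C(1\wedge t)^{-1/\alpha}\|g\|_\infty$ (quoted from Wang's Harnack-type estimate) then applies to both $h$ and $f_h$ inside \eqref{e:steinequs3}, and integrating against $\mathrm{e}^{-at}$ gives $\|\nabla f_h\|_\infty<\infty$ directly --- no splitting at $t=1$, no Wasserstein step. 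When $\theta=0$, however, $f_h$ is only $V_\beta$-bounded, not bounded, so the short-time estimate in your proposed form does \emph{not} apply to $af_h$, and ``choosing $a$ large'' only makes this worse. Your long-time piece has the same problem: the $\mathbb{W}_1$-contraction of Lemma~\ref{contract} controls $\nabla\mathrm{P}_t g$ only for Lipschitz $g$, and you have not explained why $\mathrm{P}_1(af_h)$ should be Lipschitz with an $\mathbf{x}$-independent constant when $f_h$ itself grows like $V_\beta$.

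The paper's device for $\theta=0$ is different from anything you outline: it mollifies $h$ to $h_\delta$, uses a Bismut--Elworthy--Li type derivative formula for the subordinated semigroup (Zhang, \emph{Derivative formula}, Theorem~1.1) to write $\nabla_{\mathbf u}f_{h,\delta}(\mathbf{x})$ as an expectation involving $f_{h,\delta}(\mathbf X_t^{\mathbf x})$ and a stochastic integral of $\nabla_{\mathbf u}\mathbf X_s^{\mathbf x}$, bounds this by Cauchy--Schwarz using $\mathbb{E}|f_{h,\delta}(\mathbf X_t^{\mathbf x})|^2\le C(1+|\mathbf{x}|^2)^\beta$ (this is where $2\beta<\alpha$ enters) together with $|\nabla_{\mathbf u}\mathbf X_t^{\mathbf x}|\le \mathrm{e}^{K_1 t}|\mathbf u|$, and then removes the mollification by the closedness of $\nabla$. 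The Bismut formula is what lets the polynomial growth of $f_h$ be traded for a moment of $\mathbf X_t^{\mathbf x}$; your proposal is missing an ingredient of this kind.
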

\begin{proof} 
	(i) We first show that $\int_0^{\infty} [\mathrm{P}_t h(\mathbf{x})- \mu(h)] \dif t$ is well defined. Due to $\|h\|_{\infty}\leq C$, it follows from Lemma \ref{LemmaErgodicity0} that
\begin{align}{\label{fh1}}
\Big| \int_0^{\infty} \big[\mathrm{P}_t h(\mathbf{x}) - \mu(h) \big] \dif t \Big|& \leqslant \int_0^{\infty}\left|\mathrm{P}_t h(\mathbf{x})-\mu(h)\right| \mathrm{d} t \nonumber  \\
&\leqslant \|h\|_{\infty}\int_0^{\infty}c_1 V_\beta(\mathbf{x}) \mathrm{e}^{-c_2 t}\mathrm{d} t \\
& \leqslant C  V_\beta (\mathbf{x})  \leqslant C \left(1+|\mathbf{x}|^2\right)^{\beta/2} <\infty.\nonumber
\end{align}
Let $R_a=(a-\mathcal{A})^{-1}$ be the resolvent operator associate with $\mathcal{A}$ (see, \cite{applebaum2009levy}). Then, for all $a>0$, we have
\begin{align*}
R_a (h(\mathbf{x})-\mu(h)))=\int_0^{\infty} {\rm e}^{-a t} \mathrm{P}_t (h(\mathbf{x})-\mu(h)) \mathrm{d} t,
\end{align*}
which leads to
\begin{align*}
a \int_0^{\infty} {\rm e}^{-a t} \mathrm{P}_t (h(\mathbf{x})-\mu(h)) \mathrm{d} t-(h(\mathbf{x})-\mu(h))=\mathcal{A}\left(\int_0^{\infty} {\rm e}^{-a t} \mathrm{P}_t (h(\mathbf{x})-\mu(h)) \mathrm{d} t\right).
\end{align*}
As $a \rightarrow 0+$, by \eqref{fh1} and dominated convergence theorem, one has
\begin{align*}
&a \int_0^{\infty} {\rm e}^{-a t} \mathrm{P}_t (h(\mathbf{x})-\mu(h)) \mathrm{d} t-(h(\mathbf{x})-\mu(h)) \rightarrow-(h(\mathbf{x})-\mu(h)),\\
&\int_0^{\infty} {\rm e}^{-a t} \mathrm{P}_t (h(\mathbf{x})-\mu(h)) \mathrm{d} t \rightarrow \int_0^{\infty} \mathrm{P}_t (h(\mathbf{x})-\mu(h)) \mathrm{d} t.
\end{align*}
By the fact that $\mathcal{A}$ is a closed operator, we know that $\int_0^{\infty} \mathrm{P}_t (h(\mathbf{x})-\mu(h)) \mathrm{d} t$ is in the domain of $\mathcal{A}$ and
\begin{align*}
-(h(\mathbf{x})-\mu(h))=\mathcal{A}\left(\int_0^{\infty} \mathrm{P}_t (h(\mathbf{x})-\mu(h)) \mathrm{d} t\right).
\end{align*}
Thus, we obtain \eqref{Lip2}, that is,
\begin{align*}
f_h(\mathbf{x})=-\int_0^{\infty}  \mathrm{P}_t[h(\mathbf{x})-\mu(h)] \mathrm{d} t .
\end{align*}
Additionally, \eqref{fh1} yields \eqref{e:reghf0} immediately.
	
(ii) Since $ (a - \mathcal{A} ) f_h(\mathbf{x}) = a f_h(\mathbf{x}) - h(\mathbf{x}) + \mu(h) $, by $R_a=(a-\mathcal{A})^{-1}$, one has
\begin{equation*}
f_h(\mathbf{x}) = (a  - \mathcal{A} )^{-1} [ a f_h(\mathbf{x}) - h(\mathbf{x}) + \mu(h)]  = R_{a} [ a f_h(\mathbf{x}) - h(\mathbf{x}) + \mu(h)],
\end{equation*}
which implies \eqref{e:steinequs3} holds true, that is,
\begin{equation*}
f_h(\mathbf{x})=  \int_0^{\infty} {\rm e}^{-a t} \mathrm{P}_t[  a f_h(\mathbf{x}) - h(\mathbf{x}) + \mu(h) ] \dif t,  \quad  \forall  a>0.
\end{equation*}
    
We divide the proof of \eqref{e:reghNf0} into two cases: $\theta>0$ and $\theta = 0$. 
   
\underline{In the case of $\theta>0$}, since $h \in \mathcal{B}_b(\R^d,\R)$, by Lemma \ref{LemmaErgodicity1}, one has that for all $t>0$
\begin{equation*} 
|\mathrm{P}_t h(\mathbf{x}) - \mu(h) |
\leq \| h \|_{\infty} \| \mathcal{L}(\mathbf X^\mathbf{x}_t) - \mu  \|_{ \rm{TV} }
\leq C\| h \|_{\infty} {\rm e}^{-c t} ,
\end{equation*}
which implies that
\begin{equation*}
|f_h(\mathbf{x})|\leq \int_0^{\infty} \big| \mathrm{P}_t[ h(\mathbf{x}) - \mu(h) ] \big| \dif t\le  \int_{0}^{\infty} C  {\rm e}^{-c t} \|h\|_{\infty} \dif t \le \ C \|h\|_{\infty}.
\end{equation*}
That is, $f_h$ and $h$ are both in $\mathcal{B}_b(\R^d, \R)$. By \cite[Corollary 2.2]{wang2014harnack}, one has 
	\begin{equation*}
		|\nabla \mathrm{P}_t f_h(\mathbf{x})| \leq \frac{C \| f _h\|_{\infty} }{ (1\wedge t)^{{1}/{\alpha}}} < \infty 
		\quad \text{ and} \quad
		| \nabla \mathrm{P}_t h(\mathbf{x})|  \leq  \frac{C\| h \|_{\infty}}{ (1\wedge t)^{{1}/{\alpha}} } <\infty.
	\end{equation*}
	Combining these with \eqref{e:steinequs3} and using the dominated convergence theorem, one has
	\begin{equation*}
		\nabla f_h(\mathbf{x}) =  \int_0^{\infty} {\rm e}^{-a t} \big[ a \nabla \mathrm{P}_t f_h(\mathbf{x}) - \nabla \mathrm{P}_t h(\mathbf{x}) \big] \dif t,
	\end{equation*}
	which implies that for $\theta>0$, for all $\mathbf{x}\in \R^d$,
	\begin{equation}\label{e:b_Gfh1}
		\begin{split}
			|\nabla f_h(\mathbf{x})|
			&\leq \int_0^{\infty} {\rm e}^{-a t} \big[ a |\nabla \mathrm{P}_t f_h(\mathbf{x})| + | \nabla \mathrm{P}_t h(\mathbf{x})| \big] \dif t  \\
			&\leq \int_0^{\infty} {\rm e}^{-a t} \Big[ a \frac{C \| f_h \|_{\infty} }{ (1\wedge t)^{{1}/{\alpha}}} + \frac{C\| h \|_{\infty}}{ (1\wedge t)^{{1}/{\alpha}} } \Big] \dif t  
			\leq C \| h \|_{\infty}.
		\end{split}
	\end{equation}
    
\underline{In the case of $\theta=0$}, for any $h\in \mathcal{B}_b(\R^d,\R)$, we define 
\begin{align}{\label{hdelta}}
h_\delta(\mathbf{x})=\int_{\mathbb{R}^d} \varphi_\delta(\mathbf{y}) h(\mathbf{x}-\mathbf{y}) \mathrm{d} \mathbf{y}, \quad \delta>0,
\end{align}
where $\varphi_\delta$ is the density function of the normal distribution $\mathcal{N}\left(\mathbf{0}, \delta^2 \mathbf{I}_d\right)$. 
 
 Thus $h_\delta$  is smooth, $\lim _{\delta \rightarrow 0} h_\delta(\mathbf{x})=h(\mathbf{x})$, and   $\| h_\delta \|_{\infty}  \leqslant \|h\|_{\infty}$ . It follows from \eqref{Lip2} that the solution to \eqref{e:b_Poi} with respect to $h_\delta$, denoted by $f_{h, \delta}$, can be expressed as 
\begin{align}{\label{stein1}}
f_{h, \delta}(\mathbf{x})=-\int_0^{\infty} \mathrm{P}_t\left[h_\delta(\mathbf{x})-\mu\left(h_\delta\right)\right] \mathrm{d} t .
\end{align}
 Moreover, by \eqref{e:steinequs3}, the solution $f_{h, \delta}$ can also be expressed as 
\begin{align}{\label{stein2}}
f_{h, \delta}(\mathbf{x})=\int_0^{\infty} \mathrm{e}^{-a t} \mathrm{P}_t\left[a f_{h, \delta}(\mathbf{x})-h_{ \delta}(\mathbf{x})+\mu(h_{ \delta})\right] \mathrm{d} t, \quad \forall a>0 .
\end{align}

 For all $\mathbf{u} \in \mathbb{R}^d$, \eqref{stein1} leads to 
\begin{align}\label{gradfh}
\nabla_{\mathbf{u}} f_{h, \delta}(\mathbf{x})=\int_0^{\infty} \nabla_{\mathbf{u}} \mathbb{E} h_\delta\left(\mathbf{X}_t^{\mathbf{x}}\right) \mathrm{d} t=\int_0^{\infty} \mathbb{E}\left[\nabla h_\delta\left(\mathbf{X}_t^{\mathbf{x}}\right) \nabla_{\mathbf{u}} \mathbf{X}_t^{\mathbf{x}}\right] \mathrm{d} t ,
\end{align}
where $\nabla_{\mathbf{u}}\mathbf X_t^\mathbf{x}:=\lim _{\epsilon \rightarrow 0} (\mathbf X_t^{\mathbf{x}+\epsilon \mathbf{u}}-\mathbf X_t^\mathbf{x})/{\epsilon}, t \geq 0$.  This limit exists and satisfies 
$$
\frac{\mathrm{d}}{\mathrm{d} t} \nabla_{\mathbf{u}} \mathbf X_t^\mathbf{x}= \nabla_{\nabla_{\mathbf{u}} \mathbf X_t^\mathbf{x}}\mathbf  b\left(\mathbf X_t^\mathbf{x}\right), 
 \quad \text{and} \quad
\nabla_{\mathbf{u}} \mathbf X_0^\mathbf{x}=\mathbf{u}. 
$$
By Assumption \ref{a:Assumption}, it holds that
\begin{align*}
\frac{\mathrm{d}}{\mathrm{d} t}\left|\nabla_{\mathbf{u}} \mathbf X_t^\mathbf{x}\right|^2&=2\left\langle\nabla_{\mathbf{u}} \mathbf X_t^\mathbf{x}, \nabla_{\nabla_{\mathbf{u}} \mathbf X_t^\mathbf{x}} \mathbf b\left(\mathbf X_t^\mathbf{x}\right)\right\rangle\\
&= 2\left\langle\lim _{\epsilon \rightarrow 0} (\mathbf X_t^{\mathbf{x}+\epsilon \mathbf{u}}-\mathbf X_t^\mathbf{x})/{\epsilon}, \lim _{\epsilon \rightarrow 0} (\mathbf b(\mathbf X_t^{\mathbf{x}+\epsilon \mathbf{u}})-\mathbf b(\mathbf X_t^\mathbf{x}))/{\epsilon}\right\rangle\\
&=2\lim _{\epsilon \rightarrow 0}\frac{1}{\epsilon^2}\left\langle \mathbf X_t^{\mathbf{x}+\epsilon \mathbf{u}}-\mathbf X_t^\mathbf{x} ,  \mathbf b(\mathbf X_t^{\mathbf{x}+\epsilon \mathbf{u}})-\mathbf b(\mathbf X_t^\mathbf{x})  \right\rangle\\
&\leq 2 K_1\left|\nabla_{\mathbf u} \mathbf X_t^\mathbf{x}\right|^2.
\end{align*}
 Applying Gr\"onwall's inequality, we know $\left|\nabla_{\mathbf{u}}\mathbf X_t^\mathbf{x}\right|^2 \leq \mathrm{e}^{2K_1t}\left|\mathbf u\right|^2$ for all ${\bf u} \in \mathbb{R}^d$.  Hence, ${f}_{h,\delta}$ is in $\mathcal{C}^1\left(\mathbb{R}^d, \mathbb{R}\right)$ by \eqref{gradfh}. Then it follows from \cite[ Theorem 1.1]{zhang2013derivative} and \eqref{stein2} that for any $\mathbf{u} \in \mathbb{R}^d$,
\begin{align}{\label{stein3}}
\nabla_{\mathbf{u}} {f}_{h,\delta}(\mathbf{x})=\int_0^{\infty} \mathrm{e}^{-a t} \mathbb{E}\left[\frac{1}{S_t}\left(a {f}_{h,\delta}\left(\mathbf{X}_t^{ \mathbf{x}}\right)-h_\delta\left(\mathbf{X}_t^{ \mathbf{x}}\right)+\mu (h_\delta)\right)\right] \int_0^t  \nabla_{\mathbf{u}} \mathbf{X}_s^{ \mathbf{x}} \mathrm{d} W_{S_s} \mathrm{ d} t,
\end{align}
where the subordinated $\left(W_{S_t}\right)_{t \geqslant 0}$ is the rotationally symmetric $\alpha$ stable process.

By Lemma \ref{LemmaErgodicity0} , with similar calculations for \eqref{fh1}, we have
\begin{align}{\label{fhdelta}}
\left|f_{h,\delta}(\mathbf{x})\right| & \leqslant \int_0^{\infty}\left|\mathrm{P}_t h_{\delta}(\mathbf{x})-\mu(h_{\delta})\right| \mathrm{d} t \leqslant \|h\|_{\infty}\int_0^{\infty}c_1 V_\beta(\mathbf{x}) \mathrm{e}^{-c_2 t}\mathrm{d} t \\
& \leqslant C  V_\beta (\mathbf{x})  \leqslant C \left(1+|\mathbf{x}|^2\right)^{\beta/2}. \nonumber
\end{align}
 Then, by the moment estimate for $\mathbb{E}\left|\mathbf{X}_s^{\mathbf{x}}\right|$ in Lemma \ref{LemmaErgodicity0} and the fact $2 \beta<\alpha$, we have the moment estimate for $\mathbb{E}\left|{f_{h,\delta}}\left(\mathbf{X}_s^{\mathbf{x}}\right)\right|^2$, that is,
$$
\mathbb{E}\left|f_{h,\delta}\left(\mathbf{X}_s^{\mathbf{x}}\right)\right|^2 \leqslant C \mathbb{E}\left(1+\left|\mathbf{X}_s^{\mathbf{x}}\right|^2\right)^{\beta } \leqslant   C\left(1+|\mathbf{x}|^{2}\right)^\beta. 
$$
 Using \cite[Theorem 1.1]{zhang2013derivative} and $|h_\delta(\mathbf{x})|  \leqslant \|h\|_{\infty}$, there exists some positive constant $C$ such that, for any $|\mathbf{u}|\leq 1$, and $a> K_1+1$,
\begin{align}{\label{grafhdelta}}
&\left|\nabla_{\mathbf{u}} {f}_{h,\delta}(\mathbf{x})\right| \nonumber\\
= &\int_0^{\infty} \left|\mathrm{e}^{-a t} \mathbb{E}\left[\frac{1}{S_t}\left(a {f}_{h,\delta}\left(\mathbf{X}_t^{ \mathbf{x}}\right)-h_{\delta}\left(\mathbf{X}_t^{ \mathbf{x}}\right)+\mu (h_{\delta})\right)\right] \int_0^t  \nabla_{\mathbf{u}} \mathbf{X}_s^{ \mathbf{x}} \mathrm{d} W_{S_s} \right|\mathrm{ d}t\\
\leq & \int_0^{\infty} \mathrm{e}^{-a t}\left[a\left(\mathbb{E}\left|{f_{h,\delta}}\left(\mathbf{X}_t^{\mathbf{x}}\right)\right|^2\right)^{1 / 2}+\left(\mathbb{E}\left|h_{\delta}\left(\mathbf{X}_t^{\mathbf{x}}\right)-\mu(h_{\delta})\right|^2\right)^{1 / 2}\right] \mathrm{e}^{ K_1t} t^{-1 / \alpha} \mathrm{d} t \nonumber\\
\leqslant & C\left(1+|\mathbf{x}|^{2}\right)^ {\beta/2}.\nonumber
\end{align}

In the final step, we prove that $\lim _{\delta \rightarrow 0} \nabla_{\mathbf{u}} f_{h, \delta}(\mathbf{x})=\nabla_{\mathbf{u}} f_h(\mathbf{x})$, which implies $\left|\nabla_{\mathbf{u}} f_h(\mathbf{x})\right| \leqslant C\left(1+|\mathbf{x}|^{2}\right)^ {\beta/2} $ by \eqref{grafhdelta}, and the proof will be complete. 

By \eqref{fhdelta} and the dominated convergence theorem, we have
\begin{align}{\label{limdelta}}
\lim _{\delta \rightarrow 0} f_{h, \delta}(\mathbf{x})=-\int_0^{\infty} \mathrm{P}_t[h(\mathbf{x})-\mu(h)] \mathrm{d} t=f_h(\mathbf{x}) .
\end{align}
It follows from \eqref{stein3}, \eqref{grafhdelta}, \eqref{limdelta} and the dominated convergence theorem that
\begin{align*}
& \lim _{\delta \rightarrow 0} \nabla_{\mathbf{u}} f_{h, \delta}(\mathbf{x}) \\
= & \lim _{\delta \rightarrow 0} \int_0^{\infty} \mathrm{e}^{-a t} \mathbb{E}\left[\frac{1}{S_t}\left(a f_{h, \delta}\left(\mathbf{X}_t^{\mathbf{x}}\right)-h_\delta\left(\mathbf{X}_t^{\mathbf{x}}\right)+\mu\left(h_\delta\right)\right)\right] \int_0^t  \nabla_{\mathbf{u}} \mathbf{X}_s^{\mathbf{x}} \mathrm{d} W_{S_s} \mathrm{~d} t \\
= & \int_0^{\infty} \mathrm{e}^{-a t} \mathbb{E}\left[\frac{1}{S_t}\left(a f_h\left(\mathbf{X}_t^{\mathbf{x}}\right)-h\left(\mathbf{X}_t^{\mathbf{x}}\right)+\mu\left(h\right)\right)\right] \int_0^t \nabla_{\mathbf{u}} \mathbf{X}_s^{\mathbf{x}} \mathrm{d} W_{S_s} \mathrm{d} t .
\end{align*}
 Then, combining \eqref{limdelta} and above equation, by the fact that the operator $\nabla$ is closed, we know that $f_h(\mathbf{x})$ is in the domain of $\nabla$ and
$$
\lim _{\delta \rightarrow 0} \nabla_{\mathbf{u}} f_{h, \delta}(\mathbf{x})=\nabla_{\mathbf{u}} f_h(\mathbf{x}).
$$
Thus, \eqref{e:reghNf0} holds by \eqref{grafhdelta} immediately.
\end{proof}

\begin{proof}[Proof of Theorem \ref{thm:Limit1}. ]
By Itô's formula for function $f_h$ which is the solution to Poisson equation \eqref{e:b_Poi} , one has
\begin{align*} 
f_h\left(\mathbf X_t^\mathbf{x}\right)-f_h(\mathbf{x}) 
=\int_0^t \mathcal{A} f_h\left(\mathbf X_s^\mathbf{x}\right) \mathrm{d} s+\int_0^t \int_{\mathbb{R}^d_0}\left[f_h\left(\mathbf X_{s-}^\mathbf{x}+\mathbf z\right)-f_h\left(\mathbf X_{s-}^\mathbf{x}\right)\right] \widetilde{N}(\mathrm{d} \mathbf z , \mathrm{d} s).\nonumber
\end{align*}
Combining above equation with Poisson equation \eqref{e:b_Poi}, one has
\begin{align}\label{I1I2}
& \sqrt{t}\left[ \mathcal{E}_t^{\mathbf{x}}(h)  -\mu(h)\right]=\frac{1}{\sqrt{t}} \int_0^t \mathcal{A} f_h\left(\mathbf X_s^\mathbf{x}\right) \mathrm{d}s\\
= & \frac{1}{\sqrt{t}}\left[f_h\left(\mathbf X_t^\mathbf{x}\right)-f_h(\mathbf{x})\right]-\frac{1}{\sqrt{t}} \int_0^t \int_{\mathbb{R}^d_0}\left[f_h\left(\mathbf X_{s-}^\mathbf{x}+\mathbf z\right)-f_h\left(\mathbf X_{s-}^\mathbf{x}\right)\right] \widetilde{N}(\mathrm{d} \mathbf z, \mathrm{d} s)\nonumber\\
=&:I_1+I_2 .\nonumber
\end{align}

For $I_1$, we know $\left|f_h(\mathbf{x})\right| \leqslant C\left(1+|\mathbf{x}|^{ 2}\right)^{ \beta/2}$ by Lemma \ref{lem:reghf} (i). Combining Lemma \ref{LemmaErgodicity0}, we can obtain that 
\begin{align}\label{I01}
\mathbb E\left|\frac{1}{\sqrt{t}}\left[f_h\left(\mathbf X_t^\mathbf{x}\right)-f_h(\mathbf{x})\right]\right| \leq \frac{1}{\sqrt{t}}\mathbb E\left[\left| f_h\left(\mathbf X_t^\mathbf{x}\right)\right|+\left|f_h(\mathbf{x}) \right|\right] \rightarrow 0, \quad \text { as } t \rightarrow \infty.
\end{align}

For $I_2$, It follows from Lemma \ref{lem:reghf} that for some $C>0$ and $\beta<\alpha/4$,
\begin{align*}
& \int_{\mathbb{R}_0^d}\left[f_h(\mathbf{x}+ \mathbf{z})-f_h(\mathbf{x})\right]^2 \nu(\mathrm{d} \mathbf{z}) \\
= & \int_{|\mathbf{z}| \leqslant 1}\left|\int_0^1\left\langle\nabla f_h(\mathbf{x}+r  \mathbf{z}),  \mathbf{z}\right\rangle \mathrm{d} r\right|^2 \nu (\mathrm{d}\mathbf {z})+\int_{|\mathbf{z}|>1}\left[f_h(\mathbf{x}+ \mathbf{z})-f_h(\mathbf{x})\right]^2 \nu(\mathrm{d} \mathbf{z}) \\
\leqslant & C \int_{|\mathbf{z}| \leqslant 1}\left(1+|\mathbf{x}|^{\beta}+|\mathbf{z}|^{\beta}\right)^2|\mathbf{z}|^2 \nu(\mathrm{d} \mathbf{z}) + C\int_{|\mathbf{z}|>1}\left(1+|\mathbf{x}|^{\beta}+|\mathbf{z}|^{\beta}\right)^2 \nu(\mathrm{d} \mathbf{z}),
\end{align*}
which implies that
\begin{align}{\label{VV}}
& \mathcal V\left(f_h\right):=\int_{\mathbb{R}^d} \int_{\mathbb{R}^d_0}\left[f_h(\mathbf{x}+ \mathbf{z})-f_h(\mathbf{x})\right]^2 \nu(\mathrm{d}\mathbf z) \mu(\mathrm{d}\mathbf{x}) \\
 \leqslant& C \int_{\mathbb{R}^{d}}\left[\int_{|\mathbf z|\leq 1}\left(1+|\mathbf{x}|^{\beta}+|\mathbf z|^{\beta}\right)^2|\mathbf z|^2 \nu(\mathrm{d}\mathbf z)+\int_{|\mathbf z|>1}\left(1+|\mathbf{x}|^{\beta}+|\mathbf z|^{\beta}\right)^2 \nu(\mathrm{d}\mathbf z)\right] \mu(\mathrm{d}\mathbf{x})\nonumber \\
\leqslant&  C  \left[ 1 + \mu\left(|\mathbf{x}|^{2\beta}\right)\right]  <\infty\nonumber
\end{align}
where the last two inequalities hold from the fact $2\beta<\alpha$ and the fact that $\nu(\mathrm{d} \mathbf{z})=C_{d, \alpha}|\mathbf{z}|^{-\alpha-d} \mathrm{d} \mathbf{z}$.

To make notations simple, we denote
\begin{equation}\label{e:U}
	\begin{aligned}
		U_i & =\int_{i-1}^i 	\int_{\mathbb{R}_0^d}\left[f_h\left(\mathbf{X}_{s-}^{\mathbf{x}}+ \mathbf{z}\right)-f_h\left(\mathbf{X}_{s-}^{\mathbf{x}}\right)\right] \widetilde{N}(\mathrm{d} \mathbf{z},\mathrm{d} s) \quad \text { for } i=1,2, \cdots,\lfloor t\rfloor, \\
		U_{\lfloor t\rfloor+1} & =\int_{\lfloor t\rfloor}^t 	\int_{\mathbb{R}_0^d}\left[f_h\left(\mathbf{X}_{s-}^{\mathbf{x}}+ \mathbf{z}\right)-f_h\left(\mathbf{X}_{s-}^{\mathbf{x}}\right)\right] \widetilde{N}(\mathrm{d} \mathbf{z},\mathrm{d} s).
\end{aligned}
\end{equation}
Then, we have
\begin{align*}
I_2 = -\frac{1}{\sqrt{t}} \int_0^t \int_{\mathbb{R}_0^d}\left[f_h\left(\mathbf{X}_{s-}^{\mathbf{x}}+\mathbf{z}\right)-f_h\left(\mathbf{X}_{s-}^{\mathbf{x}}\right)\right] \widetilde{N}(\mathrm{d} \mathbf{z},\mathrm{d} s )=-\sum_{i=1}^{\lfloor t\rfloor+1} U_i.
\end{align*}

We know that $\{U_i\}$ are martingale differences. By Lemma \ref{LemmaErgodicity0}, Lemma \ref{lem:reghf} and the calculation which is similar with inequality \eqref{VV} for all $i=1,2, \cdots,\lfloor t\rfloor+1$, we have 
 \begin{align}\label{U2}
    \mathbb{E} |U_i|^2&=\int_{i-1}^i \int_{\mathbb{R}_0^d}\mathbb{E} \left[f_h\left(\mathbf{X}_{s }^{\mathbf{x}}+\mathbf{z}\right)-f_h\left(\mathbf{X}_{s }^{\mathbf{x}}\right)\right]^2 \nu(\mathrm{d} \mathbf z)\mathrm{d} s\\
    &\leqslant C \int_{i-1}^i\left(1+\mathbb{E} \left|\mathbf{X}_{s}^{\mathbf{x}}\right|^{2\beta}\right) \mathrm{d} s \leqslant C\left(1+|\mathbf{x}|^{2}\right)^\beta<\infty.\nonumber
\end{align} 
We claim that
\begin{align}
&\lim _{t \rightarrow \infty} \mathbb{E}\left[\max _{1 \leqslant i \leqslant\lfloor t\rfloor+1} \frac{1}{\sqrt{\mathcal{V}\left(f_h\right) t}}\left|U_i\right|\right]=0,\label{UU5} \\
&\sum_{i=1}^{\lfloor t\rfloor+1} \frac{1}{\mathcal{V}\left(f_h\right)t}\left|U_i\right|^2 \xrightarrow{\mathrm{P}} 1, \quad \text { as } t \rightarrow \infty .\label{UU6}
\end{align}

Applying the martingale CLT (see, \cite[Theorem 2]{sethuraman2002martingale}), one has
\begin{align}\label{martingale}
\frac{1}{\sqrt{\mathcal{V}\left(f_h\right) t}} \sum_{i=1}^{\lfloor t\rfloor+1} U_i \Rightarrow \mathcal{N}(0,1),\quad \text { as } t \rightarrow \infty.
\end{align}
By using the converging together lemma (see, \cite{durrett2019probability}), and combining \eqref{I1I2}, \eqref{I01} and \eqref{martingale}, one has the desired result, that is,
$$
\sqrt{t}\left[\mathcal{E}_t^x(h)-\mu(h)\right] \Rightarrow \mathcal{N}\left(0,\mathcal V\left(f_h\right)\right), \quad \text { as } t \rightarrow \infty.
$$

It remains to show \eqref{UU5} and \eqref{UU6}. For \eqref{UU5}, one has
\begin{align*}
\mathbb{E}\left[\max _{1 \leqslant i \leqslant\lfloor t\rfloor+1}\left|U_i\right|^2\right]& =\mathbb{E}\left[\max _{1 \leqslant i \leqslant\lfloor t\rfloor+1}\left(\left|U_i\right|^2 1_{\left\{\left|U_i\right|^2 \leqslant \sqrt t\right\}}+\left|U_i\right|^2 1_{\left\{\left|U_i\right|^2>\sqrt t\right\}}\right)\right]\\
& \leqslant \mathbb{E}\left[\max _{1 \leqslant i \leqslant\lfloor t\rfloor+1}\left|U_i\right|^2 1_{\left\{\left|U_i\right|^2 \leq \sqrt t\right\}}\right]+\mathbb{E}\left[\max _{1 \leqslant i \leqslant[t]+1}\left|U_i\right|^2 1_{\left\{\left|U_i\right|^2>\sqrt t\right\}}\right] \\
& \leqslant \sqrt t+\mathbb{E}\left[\max _{1 \leqslant i \leqslant\lfloor t\rfloor+1}\left|U_i\right|^2 1_{\left\{\left|U_i\right|^2>\sqrt t\right\}}\right]\\
& \leqslant \sqrt t+\sum_{i=1}^{\lfloor t\rfloor+1} \mathbb{E}\left[\left|U_i\right|^2 1_{\left\{\left|U_i\right|^2>\sqrt t\right\}}\right]\\
&\leqslant \sqrt t+(\lfloor t\rfloor+1) \max _{1 \leqslant i \leqslant\lfloor t\rfloor+1} \mathbb{E}\left[\left|U_i\right|^2 1_{\left\{\left|U_i\right|^2>\sqrt t\right\}}\right].
\end{align*}
By \eqref{U2}, it is easy to know that,
$$
\max _{1 \leqslant i \leqslant \lfloor t \rfloor+1} \mathbb{E}\left[\left|U_i\right|^2 1_{\left\{\left|U_i\right|^2>\sqrt{t}\right\}}\right] \rightarrow 0, \quad \text { as } t \rightarrow \infty.
$$
Thus, we can obtain \eqref{UU5} by the fact that
\begin{align*}
\mathbb{E}\left[\max _{1 \leqslant i \leqslant \lfloor t\rfloor+1} \frac{1}{\mathcal V\left(f_h\right) t}\left|U_i\right|^2\right] & \leqslant \frac{1}{\mathcal V\left(f_h\right) t}\left[\sqrt{t}+(\lfloor t\rfloor+1) \max _{1 \leqslant i \leqslant\lfloor t\rfloor+1} \mathbb{E}\left(\left|U_i\right|^2 1_{\left\{\left|U_i\right|^2>\sqrt{t}\right\}}\right)\right] \\
& \rightarrow 0, \quad \text { as } t \rightarrow \infty.
\end{align*}

It is easy to verify \eqref{UU6} by proving the following equation,
\begin{align}\label{condit2}
\lim _{t \rightarrow \infty} \mathbb{E}\left|\sum_{i=1}^{\lfloor t\rfloor+1} \frac{1}{\mathcal{V}\left(f_h\right) t}| U_i|^2-\frac{\lfloor t\rfloor+1}{t}\right|^2=0.
\end{align}
Observe that
\begin{align*}
&  \mathbb{E}\left|\sum_{i=1}^{\lfloor t\rfloor+1} \frac{1}{\mathcal{V}\left(f_h\right) t} | U_i |^2- \frac{\lfloor t\rfloor+1}{t}\right|^2=\mathbb{E}\left|\frac{1}{t} \sum_{i=1}^{\lfloor t\rfloor+1}\left(\frac{1}{\mathcal{V}\left(f_h\right)}\left|U_i\right|^2-1\right)\right|^2 \\
= & \frac{1}{t^2} \sum_{i=1}^{\lfloor t\rfloor+1} \mathbb{E}\left(\frac{1}{\mathcal{V}\left(f_h\right)}\left|U_i\right|^2-1\right)^2+\frac{2}{t^2} \sum_{i<j} \mathbb{E}\left[\left(\frac{1}{\mathcal{V}\left(f_h\right)}\left|U_i\right|^2-1\right)\left(\frac{1}{\mathcal{V}\left(f_h\right)}\left|U_j\right|^2-1\right)\right]\\
=&:J_1+J_2.
\end{align*}
By Kunita's inequality, Lemma \ref{LemmaErgodicity0}, Lemma \ref{lem:reghf} and the fact that $4\beta<\alpha$, there exists some positive constant $C$ such that
\begin{align*}
\mathbb{E}\left|U_i\right|^4 \leqslant & \mathbb{E}\left|\int_{i-1}^i \int_{\mathbb{R}_0^d}\left[f_h\left(\mathbf{X}_{s }^{\mathbf{x}}+ \mathbf{z}\right)-f_h\left(\mathbf{X}_{s}^{\mathbf{x}}\right)\right]^2 \nu(\mathrm{d} \mathbf{z}) \mathrm{d} s\right|^2 \\
& +\mathbb{E} \int_{i-1}^i \int_{\mathbb{R}_0^d}\left[f_h\left(\mathbf{X}_{s}^{\mathbf{x}}+ \mathbf{z}\right)-f_h\left(\mathbf{X}_{s}^{\mathbf{x}}\right)\right]^4 \nu(\mathrm{d} \mathbf{z}) \mathrm{d} s \\
 \leqslant& C \int_{i-1}^i\left(1+\mathbb{E}\left|\mathbf{X}_{s}^{\mathbf{x}}\right|^{4\beta}\right) \mathrm{d} s\leqslant C\left(1+|\mathbf{x}|^{4\beta}\right).
\end{align*}

As for $J_1$, combining above inequality with \eqref{VV} and \eqref{U2} , we know
\begin{align*}
\mathbb{E}\left(\frac{1}{\mathcal{V}\left(f_h\right)}\left|U_i\right|^2-1\right)^2
&= \mathbb{E}\left[\frac{1}{\mathcal{V}\left(f_h\right)^2}\left|U_i\right|^4-\frac{2}{\mathcal{V}\left(f_h\right)}\left|U_i\right|^2+1\right] \\
&\leqslant  C\left(1+|\mathbf{x}|^{4\beta}\right),
\end{align*}
which implies that
\begin{equation}
	\label{e:J1}	
J_1=\frac{1}{t^2} \sum_{i=1}^{\lfloor t\rfloor+1} \mathbb{E}\left(\frac{1}{\mathcal{V}\left(f_h\right)}\left|U_i\right|^2-1\right)^2 \rightarrow 0, \quad \text { as } t \rightarrow \infty.
\end{equation}

As for $J_2$, by Lemma \ref{LemmaErgodicity0} and Lemma \ref{lem:reghf}, for each $i$, it holds that for some constant $C>0$
\begin{align*}
& \sum_{j=i+1}^{\lfloor t\rfloor+1} \mathbb{E}\left[\left.\left(\frac{1}{\mathcal{V}\left(f_h\right)}\left|U_j\right|^2-1\right) \right\rvert\, \mathcal{F}_i\right]=\frac{1}{\mathcal{V}\left(f_h\right)} \sum_{j=i+1}^{\lfloor t\rfloor+1} \mathbb{E}\left[\left(\left|U_j\right|^2-\mathcal{V}\left(f_h\right)\right) \mid \mathcal{F}_i\right] \\
= & \frac{1}{\mathcal{V}\left(f_h\right)} \int_0^{\lfloor t\rfloor+1-i}\left(\mathbb{E} \int_{\mathbb{R}_0^d}\left|f_h\left(\mathbf{X}_s^{\mathbf{X}_i^{\mathbf{x}}}+ \mathbf{z}\right)-f_h\left(\mathbf{X}_s^{\mathbf{X}_i^{\mathbf{x}}}\right)\right|^2 \nu(\mathrm{d} \mathbf{z})-\mathcal{V}\left(f_h\right)\right) \mathrm{d} s \\
\leqslant & C\int_0^{\lfloor t\rfloor+1}\left(1+\left|\mathbf{X}_i^{\mathbf{x}}\right|^{2 \beta}\right) \mathrm{e}^{-c s} \mathrm{d} s \leqslant C\left(1+\left|\mathbf{X}_i^{\mathbf{x}}\right|^{2 \beta}\right).
\end{align*}
Hence, we can obtain
\begin{align}
	\label{e:J2}
J_2 & =\frac{2}{t^2} \sum_{i=1}^{\lfloor t\rfloor} \sum_{j=i+1}^{\lfloor t\rfloor+1} \mathbb{E}\left\{\left(\frac{1}{\mathcal{V}\left(f_h\right)}\left|U_i\right|^2-1\right) \mathbb{E}\left[\left.\left(\frac{1}{\mathcal{V}\left(f_h\right)}\left|U_j\right|^2-1\right) \right\rvert\, \mathcal{F}_i\right]\right\} \\
& \leqslant \frac{C}{t^2} \sum_{i=1}^{\lfloor t\rfloor} \mathbb{E}\left\{ \left|\frac{1}{\mathcal{V}\left(f_h\right)} | U_i|^2-1 \right| \left(1+\left|\mathbf{X}_i^{\mathbf{x}}\right|^{2 \beta}\right)\right\}\nonumber \\
& \leqslant \frac{C}{t^2} \sum_{i=1}^{\lfloor t\rfloor}\left[ \mathbb{E}\left|\frac{1}{\mathcal{V}\left(f_h\right)} | U_i |^2- 1\right|^2\right]^{1 / 2}\left[1+\mathbb{E}\left|\mathbf{X}_i^{\mathbf{x}}\right|^{4 \beta}\right]^{1 / 2} \rightarrow 0, \quad \text { as } t \rightarrow \infty.\nonumber
\end{align}
Combining \eqref{e:J1} and \eqref{e:J2} yields \eqref{condit2}, and the proof is complete.
\end{proof}

\subsection{The proof of Theorem \ref{thm:Limit2}}{\label{ProofResults2}}
Before providing the proof of the main theorem, we first present the following important non-CLT for SDEs driven by rotationally invariant $\alpha$-stable Lévy processes.
\begin{proposition}\label{thm:Limit}
Consider the solution $(\mathbf X_{t}^{\mathbf{x}})_{t \ge 0}$ of the following SDE  driven by a rotationally invariant $\alpha$-stable L\'{e}vy process with initial point $\mathbf{x}\in\mathbb{R}^{d}$,
\begin{equation}\label{SDE1}
	\mathrm{d}\mathbf X_{t}^{\mathbf{x}}=\mathbf{g}\left(\mathbf X_{t}^{\mathbf{x}}\right) \mathrm{d} t+\mathrm{d} \mathbf Z_{t},\ \mathbf X_{0}=\mathbf{x}\in\mathbb{R}^{d}.
\end{equation}
Let $\mathbf{g}: \mathbb{R}^d \rightarrow \mathbb{R}^d$ be such that the solution $(\mathbf X^{\mathbf{x}}_{t})_{t\ge0}$ of SDE \eqref{SDE1} satisfies the conditions for ergodicity and $\pi(\mathbf{b})<\infty$, where $\pi$ is the invariant measure of the SDE. Then 
\begin{equation*}
	t^{-\frac{1}{\alpha}}\int_{0}^{t}[\mathbf{g}(\mathbf X_{s}^{\mathbf{x}})-\pi(\mathbf{g})]\mathrm{d}s\Rightarrow \mathbf Z_{1}, \text{ as }t\to\infty,
\end{equation*} 
where $\mathbf Z_1$ is the random variable of the process $\mathbf Z_t$ at time $t=1$.
\end{proposition}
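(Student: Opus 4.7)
The key idea is that the integral $\int_0^t \mathbf{g}(\mathbf{X}_s^{\mathbf{x}}) \mathrm{d}s$ admits a simple expression via the SDE itself. Indeed, integrating \eqref{SDE1} pathwise yields the identity
\begin{equation*}
\int_0^t \mathbf{g}(\mathbf{X}_s^{\mathbf{x}})\, \mathrm{d}s = \mathbf{X}_t^{\mathbf{x}} - \mathbf{x} - \mathbf{Z}_t.
\end{equation*}
This identity reduces the whole problem to understanding three simple terms once we rescale by $t^{-1/\alpha}$, rather than analysing a Poisson-equation/martingale decomposition as in the proof of Theorem \ref{thm:Limit1}.

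My first step would be to prove that $\pi(\mathbf{g}) = 0$ under the stated hypotheses. Dividing the identity above by $t$, the left-hand side converges to $\pi(\mathbf{g})$ by Birkhoff's ergodic theorem (using ergodicity and $\pi(|\mathbf{g}|) < \infty$). On the right-hand side, $\mathbf{x}/t \to 0$ trivially; by the $\alpha$-self-similarity of $\mathbf{Z}$, $\mathbf{Z}_t / t \stackrel{\mathrm{d}}{=} t^{1/\alpha - 1} \mathbf{Z}_1 \to \mathbf{0}$ in probability since $\alpha>1$; and $\mathbf{X}_t^{\mathbf{x}}/t \to \mathbf{0}$ in probability because ergodicity implies $\mathcal{L}(\mathbf{X}_t^{\mathbf{x}}) \Rightarrow \pi$, so $\{\mathbf{X}_t^{\mathbf{x}}\}_{t \geq 1}$ is tight. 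Hence $\pi(\mathbf{g}) = \mathbf{0}$.

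With this in hand, the identity rewrites as
\begin{equation*}
t^{-1/\alpha}\int_0^t [\mathbf{g}(\mathbf{X}_s^{\mathbf{x}}) - \pi(\mathbf{g})]\, \mathrm{d}s = t^{-1/\alpha}(\mathbf{X}_t^{\mathbf{x}} - \mathbf{x}) - t^{-1/\alpha}\mathbf{Z}_t.
\end{equation*}
I would then handle the two terms on the right separately. For the first, tightness of $\{\mathbf{X}_t^{\mathbf{x}}\}$ together with $t^{-1/\alpha} \to 0$ gives $t^{-1/\alpha}(\mathbf{X}_t^{\mathbf{x}} - \mathbf{x}) \xrightarrow{\mathrm{P}} \mathbf{0}$. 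For the second, by $\alpha$-self-similarity, $t^{-1/\alpha} \mathbf{Z}_t \stackrel{\mathrm{d}}{=} \mathbf{Z}_1$ exactly for every $t>0$, and by rotational symmetry $-\mathbf{Z}_1 \stackrel{\mathrm{d}}{=} \mathbf{Z}_1$. Slutsky's theorem (the converging together lemma) then yields the claimed weak convergence to $\mathbf{Z}_1$.

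The main subtle point, and really the only nontrivial step, is establishing $\pi(\mathbf{g}) = 0$: one must ensure that $\mathbf{X}_t^{\mathbf{x}}$ is tight uniformly in $t$ (guaranteed by ergodicity plus the convergence $\mathcal{L}(\mathbf{X}_t^{\mathbf{x}}) \Rightarrow \pi$) and that Birkhoff's ergodic theorem is applicable to $\mathbf{g}$ (guaranteed by the assumed integrability $\pi(|\mathbf{g}|) < \infty$). Everything else reduces to Slutsky's theorem and the self-similarity of $\mathbf{Z}$, so no delicate analysis of the jump structure is needed.
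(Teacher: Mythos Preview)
Your proposal is correct and follows essentially the same route as the paper: integrate the SDE to obtain $\int_0^t \mathbf{g}(\mathbf{X}_s^{\mathbf{x}})\,\mathrm{d}s = \mathbf{X}_t^{\mathbf{x}} - \mathbf{x} - \mathbf{Z}_t$, use tightness of $\mathbf{X}_t^{\mathbf{x}}$ to kill $t^{-1/\alpha}(\mathbf{X}_t^{\mathbf{x}}-\mathbf{x})$, use $\alpha$-self-similarity and rotational symmetry for $-t^{-1/\alpha}\mathbf{Z}_t \stackrel{d}{=} \mathbf{Z}_1$, and conclude by Slutsky. The only cosmetic difference is how the centering is handled: the paper identifies $\int_{\mathbb{R}^d}\mathbb{E}[\mathbf{g}(\mathbf{X}_s^{\mathbf y})]\pi(\mathrm{d}\mathbf y)=\pi(\mathbf{g})$ by taking expectations in the stationary regime (which, via $m_\pi = m_\pi + t\,\pi(\mathbf{g})$, also yields $\pi(\mathbf{g})=\mathbf 0$), whereas you obtain $\pi(\mathbf{g})=\mathbf 0$ from the ergodic theorem applied to the pathwise average; both are valid under the stated hypotheses.
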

\begin{proof}
	Consider the case that the SDE \eqref{SDE1} starting from random initial point $\mathbf y_{0}$, that is, 
	\begin{equation}\label{StationarySDE}
		\mathrm{d}\mathbf X_{t}^{\mathbf y_{0}}=\mathbf{g}(\mathbf X_{t}^{\mathbf y_{0}})\mathrm{d}t+\mathrm{d}\mathbf Z_{t}.
	\end{equation}
 Then, by the fact that $\mathbf Z_{t}$ is rotationally symmetric, it is easy to know that,
    \begin{align*}
    \mathbb{E}\mathbf X_{t}^{\mathbf y_{0}}  = \mathbf y_{0}+\mathbb{E}\int_{0}^{t}\mathbf{g}(\mathbf X_{s}^{\mathbf y_{0}})\mathrm ds+\mathbb{E}\mathbf Z_{t} 
		 = \mathbf y_{0}+\int_{0}^{t} \mathbb{E}[\mathbf{g}(\mathbf X_{s}^{\mathbf y_0})]\mathrm ds,
	\end{align*}
	Let the initial point $\mathbf y_{0}$ obey the invariant measure $\pi$ of the SDE \eqref{SDE1}. Denote $m_{\pi}:=\mathbb{E}\mathbf X_{t}^{\mathbf y_{0}}$. We  have
    \begin{align*}  
     m_{\pi}:=\mathbb{E}\mathbf X_{t}^{\mathbf y_{0}}  = m_{\pi}+\int_{0}^{t}\int_{\mathbb R^d} \mathbb{E}[\mathbf{g}(\mathbf X_{s}^{\mathbf y})] \pi(\mathrm d\mathbf y)\mathrm ds,\quad \mathbf y_{0}\sim\pi.
	\end{align*}
     Combining SDE \eqref{SDE1} , we have
	\begin{align*}
		t^{-\frac{1}{\alpha}}\left(\mathbf X_{t}^{\mathbf{x}}-m_{\pi}\right)&=t^{-\frac{1}{\alpha}}\left(\mathbf{x}-m_{\pi}+\int_{0}^{t}\mathbf{g}(\mathbf X_{s}^{\mathbf{x}})ds-\int_{0}^{t}\int_{\mathbb R^d} \mathbb{E}[\mathbf{g}(\mathbf X_{s}^{\mathbf y})] \pi(\mathrm d\mathbf y)\mathrm ds+\mathbf Z_{t}\right)\\
		&=t^{-\frac{1}{\alpha}}(\mathbf{x}-m_{\pi})+t^{-\frac{1}{\alpha}}\int_{0}^{t}\left(\mathbf{g}(\mathbf X_{s}^{\mathbf{x}})-\int_{\mathbb R^d} \mathbb{E}[\mathbf{g}(\mathbf X_{s}^{\mathbf y})] \pi(\mathrm d\mathbf y)\right)\mathrm ds+t^{-\frac{1}{\alpha}}\mathbf Z_{t},
	\end{align*}
	which can be written as,
    \begin{align}\label{SDE2}
		 t^{-\frac{1}{\alpha}}\left(\mathbf X_{t}^{\mathbf{x}}- \mathbf{x}  - \mathbf Z_{t} \right)
		 = t^{-\frac{1}{\alpha}}\int_{0}^{t}\left(\mathbf{g}(\mathbf X_{s}^{\mathbf{x}})-\int_{\mathbb R^d} \mathbb{E}[\mathbf{g}(\mathbf X_{s}^{\mathbf y})] \pi(\mathrm d\mathbf y)\right)\mathrm ds.
	\end{align}

For the left-hand side of the equation \eqref{SDE2}, since $(\mathbf X_{t}^{\mathbf{x}})_{t\ge0}$ solves the SDE \eqref{SDE1} and $\pi$ is the unique invariant measure of the same SDE, it follows that $\mathbf X^{\mathbf{x}}_{t}\Rightarrow \mathbf Y$ for some $\mathbf Y\sim\pi$ as $t\to\infty$. Noting $\mathbf{x}$ is independent of $t$, we observe that 
\begin{align*}
t^{-\frac{1}{\alpha}}\left(\mathbf X_{t}^{x}-\mathbf{x}) \right)\Rightarrow0.
\end{align*}
 Meanwhile, since $(\mathbf Z_{t})_{t\ge0}$ is $\alpha$-stable, it follows that $t^{-\frac{1}{\alpha}}\mathbf Z_{t}\stackrel{d}{\equiv}\mathbf Z_{1}$. Combining the rotationally invariance of $(\mathbf Z_{t})_{t\ge0}$, it holds from Slutsky's theorem that  
\begin{equation}\label{RHS1}
		t^{-\frac{1}{\alpha}}[\mathbf X_{t}^{\mathbf{x}}-\mathbf{x}]-t^{-\frac{1}{\alpha}}\mathbf Z_{t}\Rightarrow \mathbf Z_{1},\text{ as }t\to\infty.
	\end{equation}

For the right-hand side of the equation \eqref{SDE2}, we know that, 
\begin{equation}\label{RHS2}
    \int_0^t\int_{\mathbb R^d} \mathbb{E}[\mathbf{g}(\mathbf X_{s}^{\mathbf y})] \pi(\mathrm d\mathbf y) \mathrm ds=\int_0^t\pi(\mathbf{g})\mathrm ds.
    \end{equation}
Combining \eqref{SDE2}, \eqref{RHS1} and \eqref{RHS2}, we can get the desired result.
\end{proof}

We analyze the regularity of the solution to the Poisson equation \eqref{Poisson} for the case $\theta>0$ and Lipschitz continuous test functions. Precisely, we consider the following Poisson equation here: for the Lipschitz continuous function $h$,
\begin{equation}
	\label{e:Lip_Poi}
	\mathcal{A} f_{h}({\bf x}) = h({\bf x}) - \mu(h).
\end{equation}
The regularity of the solution to \eqref{e:Lip_Poi} is stated in the following lemma.

\begin{lemma}\label{regularity1}
 Consider the Poisson equation \eqref{e:Lip_Poi}.  Let $h$ be a Lipschitz continuous function and $f_h$ be the solution to \eqref{e:Lip_Poi}. Then, for the case of $\theta>0$, it holds that for some $C>0$ , 
\begin{align*}
|f_h(\mathbf{x})| \leqslant C(1+|\mathbf{x}|^2)^{\frac{1-\theta}{2}} + C , \quad
\|\nabla f_h\|_\infty\leqslant C.
\end{align*}
\end{lemma}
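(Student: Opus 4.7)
\textbf{Proof proposal for Lemma \ref{regularity1}.} The plan is to start from the integral representation
\[
	f_h(\mathbf{x}) = -\int_0^\infty \bigl[\mathrm{P}_t h(\mathbf{x}) - \mu(h)\bigr]\, \mathrm{d}t,
\]
which is the natural Lipschitz analogue of \eqref{Lip2}. The verification that this integral is absolutely convergent and solves the Poisson equation \eqref{e:Lip_Poi} proceeds exactly as in the proof of Lemma \ref{lem:reghf}(i): apply the resolvent $R_a=(a-\mathcal{A})^{-1}$, take $a\to 0+$, and use closedness of $\mathcal{A}$. The only thing to check is integrability, which is subsumed by the pointwise bound below.

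For the pointwise bound on $|f_h|$, I would split the time integral at $t=1$. On $[1,\infty)$, Kantorovich duality \eqref{Kantorovich} gives
\[
	|\mathrm{P}_t h(\mathbf{x})-\mu(h)| \leq L_h\, \mathbb{W}_1(\mathcal{L}(\mathbf{X}_t^{\mathbf{x}}),\mu),
\]
and Lemma \ref{LemmaErgodicity2} controls this by $c_1 L_h\mathrm{e}^{-c_2 t}$, producing a bounded contribution after integration. On $(0,1]$ the Wasserstein control from Lemma \ref{LemmaErgodicity2} is not available, so I would instead use the crude Lipschitz bound $|\mathrm{P}_t h(\mathbf{x})-\mu(h)|\leq L_h(\mathbb{E}|\mathbf{X}_t^{\mathbf{x}}|+\mu(|\cdot|))+2|h(\mathbf{0})|$ and invoke the short-time moment estimate \eqref{e:Lem2.4_2} with $\beta=1$, which yields
\[
	\int_0^1 \mathbb{E}|\mathbf{X}_t^{\mathbf{x}}|\, \mathrm{d}t \;\leq\; \int_0^1 \mathbb{E} V_1(\mathbf{X}_t^{\mathbf{x}})\, \mathrm{d}t \;\leq\; C\bigl[(1+|\mathbf{x}|^2)^{(1-\theta)/2}+1\bigr].
\]
This produces exactly the claimed exponent $(1-\theta)/2$, and the full bound $|f_h(\mathbf{x})|\leq C(1+|\mathbf{x}|^2)^{(1-\theta)/2}+C$ follows.

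For the gradient bound, I would estimate $|f_h(\mathbf{x})-f_h(\mathbf{y})|$ directly via Kantorovich duality:
\[
	|f_h(\mathbf{x})-f_h(\mathbf{y})| \;\leq\; L_h \int_0^\infty \mathbb{W}_1\bigl(\mathcal{L}(\mathbf{X}_t^{\mathbf{x}}),\mathcal{L}(\mathbf{X}_t^{\mathbf{y}})\bigr)\, \mathrm{d}t.
\]
Lemma \ref{contract}(ii) provides a uniform-in-$\mathbf{x},\mathbf{y}$ contraction with factor $\mathrm{e}^{-\lambda t}$ and, crucially, a rescaled spatial dependence $|\mathbf{x}-\mathbf{y}|/(1+|\mathbf{x}-\mathbf{y}|\mathbf{1}_{(1,\infty)}(t))$. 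For $t\leq 1$ this is linear in $|\mathbf{x}-\mathbf{y}|$; for $t>1$ the prefactor $r/(1+r)\leq r$ is also linear. Integrating the exponential decay, both regimes contribute $\leq C|\mathbf{x}-\mathbf{y}|$, whence $f_h$ is globally Lipschitz with a constant depending only on $L_h$ and the contraction constants (not on $\mathbf{x}$), giving $\|\nabla f_h\|_\infty\leq C$. Promoting Lipschitz continuity to genuine $\mathcal{C}^1$ regularity in the sense of the paper's $\|\nabla\cdot\|_\infty$ can be done by a standard mollification $h_\delta=h\ast\varphi_\delta$, repeating the above argument for each $f_{h,\delta}$ and passing to the limit as in the treatment of the $\theta=0$ case in Lemma \ref{lem:reghf}(ii).

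The main obstacle is the short-time regime: unlike the bounded-$h$ case, where $\|h\|_\infty$ supplies a uniform control, here $h$ is unbounded and the pointwise bound on $\mathrm{P}_t h(\mathbf{x})-\mu(h)$ for small $t$ genuinely depends on $|\mathbf{x}|$. Getting the sharp growth exponent $(1-\theta)/2$ therefore hinges on choosing $\beta=1$ in \eqref{e:Lem2.4_2}, which in turn requires $\theta>0$ so that the super-linear drift improves the short-time Lyapunov bound; the gradient estimate, by contrast, is controlled entirely by the Wasserstein contraction in Lemma \ref{contract}(ii) and does not see this short-time growth.
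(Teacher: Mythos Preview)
Your proposal is correct and follows essentially the same route as the paper's proof: split the time integral at $t=1$, use the uniform Wasserstein ergodicity \eqref{LemmaErgodicity22} for $t>1$ and the short-time moment bound \eqref{e:Lem2.4_2} with $\beta=1$ for $t\le 1$ to obtain the exponent $(1-\theta)/2$, and use the Wasserstein contraction of Lemma \ref{contract}(ii) together with Kantorovich duality for the gradient bound. The only cosmetic difference is that the paper bounds the directional derivative $\nabla_{\mathbf{u}} f_h$ directly via the difference quotient $\lim_{\epsilon\to 0}\epsilon^{-1}|\mathbb{E}h(\mathbf{X}_t^{\mathbf{x}+\epsilon\mathbf{u}})-\mathbb{E}h(\mathbf{X}_t^{\mathbf{x}})|$ rather than first establishing global Lipschitz continuity of $f_h$ and then mollifying; your extra mollification step is harmless and arguably more careful about the existence of $\nabla f_h$.
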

\begin{proof}
(i)  
Since $h$ is a Lipschitz continuous function, we denote $ \| h \|_{{\rm Lip}}$ by its Lipschitz constant. Then, we have
\[
	| h({\bf x}) | \leqslant \| h \|_{{\rm Lip}} | {\bf x} | + | h({\bf 0})| , \quad \forall {\bf x} \in \mathbb{R}^d.
\]
Thus, $ | h({\bf x}) | \leq C V_{1}({\bf x})$ for some positive constant $C$, where $V_{1}({\bf x})$ is defined in \eqref{LyapunovFunction} with $\beta=1$ therein. Moreover, Lemma \ref{LemmaErgodicity2} implies that, $\mu(h) < C \mu(V_1)<\infty$, 
\begin{align*}
\left|\mathbb{E}\left[h\left(\mathbf X_t^\mathbf{x}\right)\right]-\mu(h)\right| \leq c_1 \| h \|_{{\rm Lip}} \mathrm{e}^{-c_2 t}, \quad t>1,
\end{align*}
and
\begin{align*}
\int_0^1\left|\mathbb{E}\left[h\left(\mathbf X_t^\mathbf{x}\right)\right]\right|\mathrm{d} t
\leq  C \int_0^1\mathbb{E}[ V_1\left(\mathbf X_t^\mathbf{x}\right) ]  \mathrm{d} t  
 \leq C(1+|\mathbf{x}|^2)^{\frac{1-\theta}{2}}+C.
\end{align*}
 Combining above inequalities yields that
\begin{align}{\label{Lip1}}
\left|\int_0^{\infty}\left[\mathrm{P}_t h(\mathbf{x})-\mu(h)\right] \mathrm{d} t\right| &\leq C\int_0^{\infty} \left|\mathrm{P}_t h(\mathbf{x})-\mu(h)\right| \mathrm{d} t \nonumber \\
&\leq \int_0^{1} \left[\left|\mathrm{P}_t h(\mathbf{x})\right|+\left|\mu(h)\right|\right] \mathrm{d} t+\int_1^{ \infty} c_1 \| h \|_{{\rm Lip}}  \mathrm{e}^{-c_2 t} \mathrm{d} t \\
&\leq C(1+|\mathbf{x}|^2)^{\frac{1-\theta}{2}}+C.\nonumber
\end{align} 
By Lemma \ref{lem:reghf}, we know the solution $f_h$ to \eqref{e:Lip_Poi} can be expressed as
\begin{align*}
f_h(\mathbf{x})=-\int_0^{\infty}  \mathrm{P}_t[h(\mathbf{x})-\mu(h)] \mathrm{d} t .
\end{align*}
 Thus, \eqref{Lip1} immediately yields that
$$
|f_h(\mathbf{x})| \leqslant \int_0^{\infty}\left|\mathrm{P}_t[h(\mathbf{x})-\mu(h)]\right| \mathrm{d} t \leqslant C(1+|\mathbf{x}|^2)^{\frac{1-\theta}{2}}+C.
$$ 

(ii) By \eqref{Kantorovich} and Lemma \ref{contract} (ii), we have
\begin{align*} 
\left|\mathbb{E}h\left(\mathbf X_t^{\mathbf{x}}\right)-\mathbb{E} h\left(\mathbf X_t^{\mathbf y}\right)\right|\leq \| h \|_{{\rm Lip}} \mathbb W_1\left(\mathcal{L}\left(\mathbf X_t^\mathbf{x}\right), \mathcal{L}\left(\mathbf X_t^\mathbf y\right)\right) \leq C \mathrm{e}^{-\lambda t } |\mathbf{x}-\mathbf y|,
\end{align*}
 which means that, for any $\mathbf u\in \mathbb{R}^d$,
\begin{align*}
|\nabla_\mathbf u f_h(\mathbf{x})|&=\left|\int_0^{\infty} \nabla_\mathbf u \mathbb{E}\left[h\left(\mathbf X_t^\mathbf{x}\right)-\mu(h)\right] \mathrm{d} t\right|\\
&\leq \int_0^{\infty} \lim_{\epsilon\rightarrow 0}\frac{1}{\epsilon}\left|\mathbb{E}h\left(\mathbf X_t^{\mathbf{x}+\epsilon \mathbf u}\right)-\mathbb{E} h\left(\mathbf X_t^{\mathbf{x}}\right)\right|\mathrm{d} t\\
&\leq C \int_0^{\infty} \mathrm{e}^{-\lambda t}|\mathbf u|\mathrm{d} t\leq C  |\mathbf u|.
\end{align*} 
So, we can get the desired result.
\end{proof}

\begin{proof}[Proof of Theorem \ref{thm:Limit2}]
(i) Without loss of generality, let us assume $d=1$ and $h(x)=x$, by Lemma \ref{LemmaErgodicity1}(i) and Proposition \ref{thm:Limit}, we have,
\begin{equation*}
	t^{-\frac{1}{\alpha}}\int_{0}^{t}[X_{s}^{x}-m_{\mu}]\mathrm{d}s\Rightarrow Z_{1}, \text{ as }t\to\infty,
\end{equation*} 
where $m_{\mu}:=\mathbb{E}X_{t}^{y_{0}}$, the initial point $y_{0}$ obeys the invariant measure ${\mu}$ of the SDE \eqref{SDE1} for the case $\theta=0$.
Thus, the term $\sqrt{t}\left[\mathcal{E}_t^{\mathbf{x}}(h)-\mu(h)\right]$ does not converge weakly to a Gaussian distribution, that is, the CLT does not hold.

(ii) Recalling \eqref{I1I2}, we have
\begin{align}\label{I1I2_Lip}
	& \sqrt{t}\left[ \mathcal{E}_t^{\mathbf{x}}(h)  -\mu(h)\right]=\frac{1}{\sqrt{t}} \int_0^t \mathcal{A} f_h\left(\mathbf X_s^\mathbf{x}\right) \mathrm{d}s\\
	= & \frac{1}{\sqrt{t}}\left[f_h\left(\mathbf X_t^\mathbf{x}\right)-f_h(\mathbf{x})\right]-\frac{1}{\sqrt{t}} \int_0^t \int_{\mathbb{R}^d_0}\left[f_h\left(\mathbf X_{s-}^\mathbf{x}+\mathbf z\right)-f_h\left(\mathbf X_{s-}^\mathbf{x}\right)\right] \widetilde{N}(\mathrm{d} \mathbf z, \mathrm{d} s)\nonumber\\
	=&:I_1+I_2 .\nonumber
\end{align}
The key distinction is that $h$ is a Lipschitz continuous function in this case. 

For $I_1$,  we know $|f_h(\mathbf{x})| \leqslant C(1+|\mathbf{x}|^2)^{\frac{1-\theta}{2}} + C$ by Lemma \ref{regularity1}, and $\sup _{t\geq 1}\mathbb{E}\left|\mathbf X_t^\mathbf{x}\right|^\beta \leq C$ with $1 \leq \beta<\alpha$ by Lemma \ref{LemmaErgodicity2}. Since $\theta>1-\alpha/2$ in our settings, we have that, 
\begin{align}{\label{I1}}
\mathbb E\left|\frac{1}{\sqrt{t}}\left[f_h\left(\mathbf X_t^\mathbf{x}\right)-f_h(\mathbf{x})\right]\right|&\leq \frac{1}{\sqrt{t}}\mathbb E\left[\left| f_h\left(\mathbf X_t^\mathbf{x}\right)\right|+\left|f_h(\mathbf{x}) \right|\right]\\
&\leq \frac{1}{\sqrt{t}}  \left(C(1+|\mathbf{x}|^2)^{\frac{1-\theta}{2}} + C \right) \rightarrow 0, \quad \text { as } t \rightarrow \infty.\nonumber
\end{align}

For $I_2$, by Lemma \ref{regularity1}, for all $\mathbf{x} \in \mathbb{R}^d$,
\begin{align}{\label{fh2}}
&G_{f_h}(\mathbf{x}):=\int_{\mathbb{R}_0^d}[f_h(\mathbf{x}+\mathbf z)-f_h(\mathbf{x})]^2 \nu(\mathrm{d} \mathbf z) \\
= & \int_{ |\mathbf z| \leqslant 1}\left[\int_0^1\langle\nabla f_h(\mathbf{x}+r \mathbf z), \mathbf z\rangle \mathrm{d} r\right]^2 \nu(\mathrm{d}\mathbf  z)+\int_{|\mathbf z|>1}[f_h(\mathbf{x}+\mathbf z)-f_h(\mathbf{x})]^2 \nu(\mathrm{d} \mathbf z) \nonumber\\
\leqslant & \int_{ |\mathbf z| \leqslant 1}\|\nabla f_h\|^2_{\infty}|\mathbf z|^2 \nu(\mathrm{d} \mathbf z)+2\int_{|\mathbf z|>1}[|f_h(\mathbf{x}+\mathbf z)|^2+|f_h(\mathbf{x})|^2] \nu(\mathrm{d} \mathbf z) \nonumber\\
\leqslant & C\int_{ |\mathbf z| \leqslant 1}|\mathbf z|^2 \nu(\mathrm{d} \mathbf z)+C \int_{|\mathbf z|>1}\left[ \left(1+|\mathbf{x}|^2\right) ^{1-\theta}+\left(1+|\mathbf{z}|^2\right) ^{1-\theta} + 1  \right] \nu(\mathrm{d} \mathbf z).\nonumber
\end{align}

 Hence, we can obtain that
 \begin{align}{\label{Vfh1}}
&\mathcal{V}\left(f_h\right):=\int_{\mathbb{R}^d} \int_{\mathbb{R}_0^d}[f_h(\mathbf{x}+\mathbf z)-f_h(\mathbf{x})]^2 \nu(\mathrm{d}\mathbf z) \mu(\mathrm{d} \mathbf{x})\\
&\leqslant C\int_{\mathbb{R}^d} \left[ \int_{|\mathbf z| \leqslant 1}|\mathbf z|^2 \nu(\mathrm{d} \mathbf z)+  \int_{|\mathbf z|>1}\left[\left(1+|\mathbf{x}|^2\right) ^{1-\theta}+\left(1+|\mathbf{z}|^2\right) ^{1-\theta}\right] \nu(\mathrm{d} \mathbf z) + 1 \right]\mu(\mathrm{d} \mathbf{x})\nonumber\\
&\leqslant C \mu\left( 1 + \left(1+|\mathbf{x}|^2\right) ^{1-\theta}\right)<\infty,\nonumber
\end{align} 
where the second inequality follows from the fact that $2(1-\theta)<\alpha$ and $\nu(\mathrm{d} \mathbf{z})=C_{d, \alpha}|\mathbf{z}|^{-\alpha-d} \mathrm{d} \mathbf{z}$, and the final inequality arises from $\sup _{t \geq 1} \mathbb{E} V_\beta\left(\mathbf{X}_t^{\mathbf{x}}\right) \leq C$ for $1\leq \beta<\alpha$ in Lemma \ref{LemmaErgodicity2}. 

 Applying the notations \eqref{e:U} in the proof of Theorem \ref{thm:Limit1}, we let 
\begin{align*}
U_i & =\int_{i-1}^i \int_{\mathbb{R}_0^d}\left[f_h\left(\mathbf{X}_{s-}^{\mathbf{x}}+\mathbf{z}\right)-f_h\left(\mathbf{X}_{s-}^{\mathbf{x}}\right)\right] \widetilde{N}( \mathrm{d} \mathbf{z},\mathrm{d} s), \quad \text { for } i=1,2, \cdots,\lfloor t\rfloor, \\
U_{\lfloor t\rfloor+1} & =\int_{\lfloor t\rfloor}^t \int_{\mathbb{R}_0^d}\left[f_h\left(\mathbf{X}_{s-}^{\mathbf{x}}+\mathbf{z}\right)-f_h\left(\mathbf{X}_{s-}^{\mathbf{x}}\right)\right] \widetilde{N}(\mathrm{d} \mathbf{z}, \mathrm{d} s).
\end{align*}

Since $\{U_i\}$ are martingale differences, by Lemma \ref{LemmaErgodicity2}, the fact that $2(1-\theta) < \alpha$ and the calculation which is similar with inequality \eqref{Vfh1} for all $i=1,2, \cdots,\lfloor t\rfloor+1$, we have
 \begin{align}\label{UU2}
    \mathbb{E} |U_i|^2&=\int_{i-1}^i \int_{\mathbb{R}_0^d}\mathbb{E} \left[f_h\left(\mathbf{X}_{s }^{\mathbf{x}}+\mathbf{z}\right)-f_h\left(\mathbf{X}_{s }^{\mathbf{x}}\right)\right]^2 \nu(\mathrm{d} \mathbf z)\mathrm{d} s\\
    &\leqslant C \int_{i-1}^i\left[ \mathbb{E}\left(1+\left|\mathbf{X}_{s}^{\mathbf{x}}\right|^2\right)^{1-\theta} + 1 \right]\mathrm{d} s \leqslant C \left[ \left(1+|\mathbf{x}|^{2}\right)^{\frac{\alpha - \theta}{2}} + 1\right]<\infty.\nonumber
\end{align} 
Similar to the proof of Theorem \ref{thm:Limit1}, we also claim that
\begin{align}
&\lim _{t \rightarrow \infty} \mathbb{E}\left[\max _{1 \leqslant i \leqslant\lfloor t\rfloor+1} \frac{1}{\sqrt{\mathcal{V}\left(f_h\right) t}}\left|U_i\right|\right]=0,\label{U5} \\
&\sum_{i=1}^{\lfloor t\rfloor+1} \frac{1}{\mathcal{V}\left(f_h\right) t}\left|U_i\right|^2 \xrightarrow{\mathrm{P}} 1, \quad \text { as } t \rightarrow \infty .\label{U6}
\end{align}

The proof of \eqref{U5} is omitted, as it follows a similar approach to the proof of \eqref{UU5}. Furthermore, we can verify \eqref{U6} by proving
\begin{equation}\label{e:U7}
	\lim _{t \rightarrow \infty} \mathbb{E}\left|\sum_{i=1}^{\lfloor t\rfloor+1} \frac{1}{\mathcal{V}\left(f_h\right) t}| U_i|^2-\frac{\lfloor t\rfloor+1}{t}\right|^2=0.
\end{equation}
Define the set $A_i=\left\{\left|U_i\right|^2 \leq t^\epsilon\right\}$, and let $A_i^c$ represent its complement, where $\epsilon>0$ is sufficiently small. Then, the right-hand side of \eqref{e:U7} satisfies that 
\begin{align*}
  &\mathrel{\phantom{=}}
  \mathbb{E}\left|\sum_{i=1}^{\lfloor t\rfloor+1} \frac{1}{\mathcal{V}\left(f_h\right) t}| U_i|^2-\frac{\lfloor t\rfloor+1}{t}\right| \\
&\leq \mathbb{E}\left[\left|\sum_{i=1}^{\lfloor t\rfloor+1} \frac{1}{\mathcal{V}\left(f_h\right) t}| U_i|^2 \mathbf 1_{A_i} -\frac{\lfloor t\rfloor+1}{t}\right|\right] +\mathbb{E}\left[\left|\sum_{i=1}^{\lfloor t\rfloor+1} \frac{1}{\mathcal{V}\left(f_h\right) t}| U_i|^2 \mathbf 1_{A_i^c}  \right|\right]\\
&=:  J_1+J_2.
\end{align*}

As for the term $J_1$, observe that, 
\begin{align*}
&\mathrel{\phantom{=}} \mathbb{E}\left[\left|\sum_{i=1}^{\lfloor t\rfloor+1} \frac{1}{\mathcal{V}\left(f_h\right) t} | U_i |^2\mathbf 1_{A_i}- \frac{\lfloor t\rfloor+1}{t}\right|^2 \right] 
= \mathbb{E}\left[\left|\frac{1}{t} \sum_{i=1}^{\lfloor t\rfloor+1}\left(\frac{1}{\mathcal{V}\left(f_h\right)}\left|U_i\right|^2\mathbf 1_{A_i}-1\right)\right|^2  \right]\\
&=  \frac{1}{t^2} \sum_{i=1}^{\lfloor t\rfloor+1} \mathbb{E}\left[\left(\frac{1}{\mathcal{V}\left(f_h\right)}\left|U_i\right|^2\mathbf 1_{A_i}-1\right)^2 \right]\\
&\mathrel{\phantom{=}} +\frac{2}{t^2} \sum_{i<j} \mathbb{E}\left[\left(\frac{1}{\mathcal{V}\left(f_h\right)}\left|U_i\right|^2\mathbf 1_{A_i}-1\right)\left(\frac{1}{\mathcal{V}\left(f_h\right)}\left|U_j\right|^2\mathbf 1_{A_j}-1\right) \right]
=:  J_{11} +  J_{12} .
\end{align*}

As for the term $J_{11}$, by \eqref{UU2}, $\mathbb{E} |U_i|^2\leq C \big[ \left(1+|\mathbf{x}|^{2}\right)^{\frac{\alpha-\theta}{2}} + 1 \big]$. Then, there exists some constant $C>0$ such that
$$
\mathbb{E}\left[\left|U_i\right|^4 \mathbf 1_{A_i}\right]\leqslant (\mathbb{E} |U_i|^2)t^{\epsilon}  \leqslant C \left[ \left(1+|\mathbf{x}|^{2}\right)^{\frac{\alpha-\theta}{2}} + 1 \right] t^{\epsilon}.
$$
So, it is easy to know that,
\begin{align*}
J_{11}=&\frac{1}{t^2} \sum_{i=1}^{\lfloor t\rfloor+1} \mathbb{E}\left[\left(\frac{1}{\mathcal{V}\left(f_h\right)}\left|U_i\right|^2\mathbf 1_{A_i}-1\right)^2\right]\\
=&\frac{1}{t^2} \sum_{i=1}^{\lfloor t\rfloor+1} \mathbb{E}\left[\left(\frac{1}{\mathcal{V}\left(f_h\right)^2}\left|U_i\right|^4\mathbf 1_{A_i}-\frac{2}{\mathcal{V}\left(f_h\right)}\left|U_i\right|^2\mathbf 1_{A_i}+1\right)\right]\\
\leqslant &\frac{1}{t^2} \sum_{i=1}^{\lfloor t\rfloor+1} \left[C\left(\left(1+|\mathbf{x}|^{2}\right)^{\frac{\alpha-\theta}{2}}+1\right) t^{\epsilon}+C\right]\\
\leqslant & C\left(\left(1+|\mathbf{x}|^{2}\right)^{\frac{\alpha-\theta}{2}}+1\right) t^{\epsilon-1}+Ct^{-1}\rightarrow 0, \quad \text { as } t \rightarrow \infty.
\end{align*}

As for the term $J_{12}$. By \eqref{fh2} and the fact that $2(1-\theta)<\alpha$, we know that there exist  a constant $\beta\in [1,\alpha)$ and $C>0$ such that
$$
|G_{f_h}(\mathbf{x})|\leq C  \left( 1 + \left(1+|\mathbf{x}|^2\right) ^{1-\theta}\right)\leq C V_\beta(\mathbf{x}).
$$
Since $\{U_i\}$ are martingale differences, fixed $i=1, \ldots,\lfloor t\rfloor$, by the fact that $\mathcal V\left(f_h\right):=\mu( G_{f_h})$ and \eqref{LemmaErgodicity21} in Lemma \ref{LemmaErgodicity0}  , we observe that for any $j>i$,
\begin{align}{\label{conditionF}}
&\mathbb{E}\left[\left.\left(\frac{1}{\mathcal V\left(f_h\right)}\left|U_j\right|^2-1\right) \right\rvert\, \mathcal{F}_{i}\right] \\
=&\frac{1}{\mathcal V\left(f_h\right)} \int_{j-i-1}^{j-i} \int_{\mathbb{R}_0^{d}} \mathbb{E}\left[f_h\left(\mathbf X_{s}^{\mathbf X_{i}^\mathbf{x}}+\mathbf z\right)-f_h\left(\mathbf X_{s}^{\mathbf X_{i}^\mathbf{x}}\right)\right]^2 \nu(\mathrm{d} \mathbf z) \mathrm{d} s-1\nonumber \\
 =&\frac{1}{\mathcal V\left(f_h\right)} \int_{j-i-1}^{j-i}\left[ \int_{\mathbb{R}_0^d}\mathbb{E}\left[f_h\left(\mathbf X_{s}^{\mathbf X_{i}^\mathbf{x}}+\mathbf z\right)-f_h\left(\mathbf X_{s}^{\mathbf X_{i}^\mathbf{x}}\right)\right]^2 \nu(\mathrm{d} \mathbf z)-\mathcal V\left(f_h\right)\right] \mathrm{d} s\nonumber \\
 =&\frac{1}{\mathcal V\left(f_h\right)} \int_{j-i-1}^{j-i}\mathbb E\left[ G_{f_h}(\mathbf X_{s}^{\mathbf X_{i}^\mathbf{x}})-\mu( G_{f_h})\right] \mathrm{d} s\nonumber\\
\leq &\frac{C}{\mathcal V\left(f_h\right)} \int_{j-i-1}^{j-i}\sup _{|f| \leq V_\beta}\left|\mathbb{E}\left[f\left(\mathbf X_{s}^{\mathbf X_{i}^\mathbf{x}}\right)\right]-\mu(f)\right| \mathrm{d} s\nonumber\\
 \leqslant& { \frac{C}{\mathcal V\left(f_h\right)} \int_{j-1}^{j}  \left(1+\left|\mathbf{X}_i^{\mathbf{x}}\right|^2\right)^{{\beta}/{2}} \mathrm{e}^{-c s} \mathrm{d} s.}\nonumber
\end{align}
As we know $J_{12}$ can be rewritten as
\begin{align}{\label{J120}}
&J_{12} :=\frac{2}{t^2} \sum_{i<j} \mathbb{E}\left[\left(\frac{1}{\mathcal{V}\left(f_h\right)}\left|U_i\right|^2\mathbf 1_{A_i}-1\right)\left(\frac{1}{\mathcal{V}\left(f_h\right)}\left|U_j\right|^2\mathbf 1_{A_j}-1\right) \right]\\
=&\frac{2}{t^2} \sum_{i<j} \mathbb{E}\left[\left(\frac{1}{\mathcal{V}\left(f_h\right)}\left|U_i\right|^2\mathbf 1_{A_i}-1\right)\left(\frac{1}{\mathcal{V}\left(f_h\right)}\left|U_j\right|^2 -1\right) \right]\nonumber\\
& -\frac{2}{t^2} \sum_{i<j} \mathbb{E}\left[\left(\frac{1}{\mathcal{V}\left(f_h\right)}\left|U_i\right|^2\mathbf 1_{A_i}-1\right)\left(\frac{1}{\mathcal{V}\left(f_h\right)}\left|U_j\right|^2\mathbf 1_{A^c_j}\right) \right]\nonumber\\
=&\frac{2}{t^2} \sum_{i<j} \mathbb{E}\left[\left(\frac{1}{\mathcal{V}\left(f_h\right)}\left|U_i\right|^2\mathbf 1_{A_i}-1\right)\mathbb{E}\left[\left(\frac{1}{\mathcal{V}\left(f_h\right)}\left|U_j\right|^2 -1\right)\rvert\, \mathcal{F}_{i } \right]\right]\nonumber\\
& -\frac{2}{t^2\mathcal{V}\left(f_h\right)^2} \sum_{i<j} \mathbb{E}\left[\left( \left|U_i\right|^2\mathbf 1_{A_i} \right)\left( \left|U_j\right|^2\mathbf 1_{A^c_j}\right) \right]+\frac{2}{t^2} \sum_{i<j} \mathbb{E}\left[\frac{1}{\mathcal{V}\left(f_h\right)}\left|U_j\right|^2\mathbf 1_{A^c_j}\right].\nonumber
\end{align}
Hence, combining \eqref{conditionF} and \eqref{J120}, we can obtain that, there exist constant $1\leq \beta<\alpha$ and $C>0$ such that,  
\begin{align}{\label{J12}}
J_{12}&\leq \frac{2}{\mathcal{V}\left(f_h\right) t^2} \sum_{i=1}^{\lfloor t\rfloor} \mathbb{E}\left\{ \left|\frac{1}{\mathcal{V}\left(f_h\right)} | U_i |^2\mathbf 1_{A_i}-1 \right\rvert\, \int_i^t C \left(1+\left|\mathbf{X}_i^{\mathbf{x}}\right|^2\right)^{ {\beta}/{2}} {\rm e}^{-c s} \mathrm{ d} s\right\}\\
 &\quad+\frac{2}{\mathcal{V}\left(f_h\right)t^2} \sum_{i=1}^{\lfloor t\rfloor} \sum_{j=i+1}^{\lfloor t\rfloor+1}\mathbb{E}\left[   \left|U_j\right|^2\mathbf 1_{A^c_j} \right]\nonumber\\
& \leqslant \frac{C}{t^2} \sum_{i=1}^{\lfloor t\rfloor} (1+t^{\epsilon})  \mathbb{E}\left[\left(1+\left|\mathbf{X}_i^{\mathbf{x}}\right|^2\right)^{{\beta}/{2}} \right]+\frac{C}{t }  \sum_{j=1}^{\lfloor t\rfloor+1}   \mathbb{E}\left[  \left|U_j\right|^2\mathbf 1_{A^c_j} \right].\nonumber
\end{align}

For the second term of the right side of \eqref{J12}. As we know $\epsilon>0$ is sufficiently small, which means that there exists $\epsilon>0$ such that $2(1-\theta)<\frac{2(1-\theta)}{1-\epsilon}<\alpha$. By using Kunita's inequality (see, \cite[Theorem 4.4.23]{applebaum2009levy}) and Lemma \ref{LemmaErgodicity2}, there exists some positive constant $C$ such that
\begin{align}{\label{Uepsilon}}
\mathbb{E}\left[\left|U_i\right|^{\frac{2}{1-\epsilon}}\right] \leqslant &   \mathbb{E}\left|\int_{i-1}^i \int_{\mathbb{R}_0^d}\left[f_h\left(\mathbf{X}_{s}^{\mathbf{x}}+ \mathbf{z}\right)-f_h\left(\mathbf{X}_{s}^{\mathbf{x}}\right)\right]^2 \nu(\mathrm{d} \mathbf{z}) \mathrm{d} s\right|^{\frac{1}{1-\epsilon}}  \\
& +\mathbb{E} \int_{i-1}^i \int_{\mathbb{R}_0^d}\left[f_h\left(\mathbf{X}_{s}^{\mathbf{x}}+  \mathbf{z}\right)-f_h\left(\mathbf{X}_{s}^{\mathbf{x}}\right)\right]^{\frac{2}{1-\epsilon}} \nu(\mathrm{d} \mathbf{z}) \mathrm{d} s\nonumber\\
\leqslant& C \int_{i-1}^i\left[\mathbb{E}\left(1+\left|\mathbf{X}_{s}^{\mathbf{x}}\right|^2\right)^{\frac{1-\theta}{1-\epsilon}}+1\right]\mathrm{d} s \leqslant C\left[\left(1+|\mathbf{x}|^{2}\right)^{\frac{\alpha- \theta}{2 }}+1\right].\nonumber
\end{align}
Then, by Holder's inequality and Markov's inequality, we have
\begin{align}{\label{EU}}
\frac1t\sum_{i=1}^{\lfloor t\rfloor+1}\mathbb{E}\left(  | U_i|^2 \mathbf 1_{A_i^c}\right)
\leq& \frac1t \sum_{i=1}^{\lfloor t\rfloor+1}\left(\mathbb{E}  |U_i| ^{\frac{2}{1-\epsilon}} \right)^{1-\epsilon}\left(\mathbb P\left(  \left|U_i\right|^{2 }> t^{ \epsilon} \right)\right)^{\epsilon}  \\
\leq &   \frac{C}{t}\left[\left(1+|\mathbf{x}|^{2}\right)^{\frac{\alpha- \theta}{2}(1-\epsilon)}+1\right]\sum_{i=1}^{\lfloor t\rfloor+1}\left(\mathbb P\left(  \left|U_i\right|^{2 }> t^{ \epsilon} \right)\right)^{\epsilon}\nonumber\\
\leq &\frac{C}{t}\left[\left(1+|\mathbf{x}|^{2}\right)^{\frac{\alpha- \theta}{2}(1-\epsilon)}+1\right]\sum_{i=1}^{\lfloor t\rfloor+1}\left(\frac{\mathbb E\left|U_i\right|^{2 }}{t^{ \epsilon}} \right)^{\epsilon} \nonumber\\
\leq & Ct^{ -\epsilon^2}\left[\left(1+|\mathbf{x}|^{2}\right)^{\frac{\alpha- \theta}{2}}+1\right] .\nonumber
\end{align}

Then, combining \eqref{J12} and \eqref{EU}. By  \eqref{e:Lem2.4_1}, we have
\begin{align*}
J_{12}&\leq \frac{C}{t^2} \sum_{i=1}^{\lfloor t\rfloor} (1+t^{\epsilon})   +C t^{ -\epsilon^2}\left(1+|\mathbf{x}|^{2}\right)^{\frac{\alpha- \theta}{2}}\\
&\leq  C(t^{-1}+t^{\epsilon-1})   + {C} t^{ -\epsilon^2}\left[\left(1+|\mathbf{x}|^{2}\right)^{\frac{\alpha- \theta}{2}}+1\right]\rightarrow 0, \text{ as } t \rightarrow \infty.
\end{align*}

At last, we turn to the term $J_2$. By performing a calculation similar to that in \eqref{EU}, we obtain
\begin{align*}
J_2:=&\mathbb{E}\left[\left|\sum_{i=1}^{\lfloor t\rfloor+1} \frac{1}{\mathcal{V}\left(f_h\right) t}| U_i|^2\mathbf 1_{A_i^c}\right|\right]
= \frac{1}{t}\sum_{i=1}^{\lfloor t\rfloor+1}\mathbb{E}\left( \frac{1}{\mathcal{V}\left(f_h\right) }| U_i|^2 \mathbf 1_{A_i^c}\right)\\
\leq &Ct^{-\epsilon^2}\left[\left(1+|\mathbf{x}|^{2}\right)^{\frac{\alpha- \theta}{2}}+1\right]\rightarrow 0, \text{ as } t \rightarrow \infty.
\end{align*}
Thus, we have completed the proof. 
\end{proof}

\bibliographystyle{amsplain}
\bibliography{Existence_and_non-existence_of_the_CLT_for_a_family_of_SDEs_driven_by_stable_process}

\providecommand{\bysame}{\leavevmode\hbox to3em{\hrulefill}\thinspace}
\providecommand{\MR}{\relax\ifhmode\unskip\space\fi MR }
\providecommand{\MRhref}[2]{%
  \href{http://www.ams.org/mathscinet-getitem?mr=#1}{#2}
}
\providecommand{\href}[2]{#2}
\begin{thebibliography}{10}

\bibitem{applebaum2009levy}
David Applebaum, \emph{L{\'e}vy processes and stochastic calculus}, second ed.,
  Cambridge Studies in Advanced Mathematics, vol. 116, Cambridge University
  Press, Cambridge, 2009. \MR{2512800}

\bibitem{arapostathis2019uniform}
Ari Arapostathis, Hassan Hmedi, Guodong Pang, and Nikola Sandri\'{c},
  \emph{Uniform polynomial rates of convergence for a class of
  {L}\'{e}vy-driven controlled {SDE}s arising in multiclass many-server
  queues}, Modeling, stochastic control, optimization, and applications, IMA
  Vol. Math. Appl., vol. 164, Springer, Cham, 2019, pp.~1--20. \MR{3970163}

\bibitem{bao2024limit}
Jianhai Bao and Jiaqing Hao, \emph{Limit theorems for {SDEs} with irregular
  drifts}, arXiv preprint arXiv:2403.06192 (2024).

\bibitem{MR4128304}
Jianhai Bao, Feng-Yu Wang, and Chenggui Yuan, \emph{Limit theorems for additive
  functionals of path-dependent {SDE}s}, Discrete Contin. Dyn. Syst.
  \textbf{40} (2020), no.~9, 5173--5188. \MR{4128304}

\bibitem{bao2022coupling}
Jianhai Bao and Jian Wang, \emph{Coupling approach for exponential ergodicity
  of stochastic {H}amiltonian systems with {L}\'{e}vy noises}, Stochastic
  Process. Appl. \textbf{146} (2022), 114--142. \MR{4370566}

\bibitem{baoyuan2017}
Jianhai Bao, George Yin, and Chenggui Yuan, \emph{Two-time-scale stochastic
  partial differential equations driven by {$\alpha$}-stable noises: averaging
  principles}, Bernoulli \textbf{23} (2017), no.~1, 645--669. \MR{3556788}

\bibitem{bao2011comparison}
Jianhai Bao and Chenggui Yuan, \emph{Comparison theorem for stochastic
  differential delay equations with jumps}, Acta Appl. Math. \textbf{116}
  (2011), no.~2, 119--132. \MR{2842984}

\bibitem{bao2012stochastic}
\bysame, \emph{Stochastic population dynamics driven by {L}\'{e}vy noise}, J.
  Math. Anal. Appl. \textbf{391} (2012), no.~2, 363--375. \MR{2903137}

\bibitem{MR481057}
Christian Berg and Gunnar Forst, \emph{Potential theory on locally compact
  abelian groups}, Ergebnisse der Mathematik und ihrer Grenzgebiete [Results in
  Mathematics and Related Areas], vol. Band 87, Springer-Verlag, New
  York-Heidelberg, 1975. \MR{481057}

\bibitem{borovkov2001piece}
K.~Borovkov and A.~Novikov, \emph{On a piece-wise deterministic {M}arkov
  process model}, Statist. Probab. Lett. \textbf{53} (2001), no.~4, 421--428.
  \MR{1856167}

\bibitem{WOS:000343456100011}
Bj\"{o}rn B\"{o}ttcher, Ren\'{e} Schilling, and Jian Wang, \emph{L\'{e}vy
  matters. {III}}, Lecture Notes in Mathematics, vol. 2099, Springer, Cham,
  2013, L\'{e}vy-type processes: construction, approximation and sample path
  properties, With a short biography of Paul L\'{e}vy by Jean Jacod, L\'{e}vy
  Matters. \MR{3156646}

\bibitem{chen2016heat}
Zhen-Qing Chen and Xicheng Zhang, \emph{Heat kernels and analyticity of
  non-symmetric jump diffusion semigroups}, Probab. Theory Related Fields
  \textbf{165} (2016), no.~1-2, 267--312. \MR{3500272}

\bibitem{chen2021supercritical}
Zhen-Qing Chen, Xicheng Zhang, and Guohuan Zhao, \emph{Supercritical {SDE}s
  driven by multiplicative stable-like {L}\'evy processes}, Trans. Amer. Math.
  Soc. \textbf{374} (2021), no.~11, 7621--7655. \MR{4328678}

\bibitem{dereich2016multilevel}
Steffen Dereich and Sangmeng Li, \emph{Multilevel {M}onte {C}arlo for
  {L}\'evy-driven {SDE}s: central limit theorems for adaptive {E}uler schemes},
  Ann. Appl. Probab. \textbf{26} (2016), no.~1, 136--185. \MR{3449315}

\bibitem{dong2020irreducibility}
Zhao Dong, Feng-Yu Wang, and Lihu Xu, \emph{Irreducibility and asymptotics of
  stochastic {B}urgers equation driven by {$\alpha$}-stable processes},
  Potential Anal. \textbf{52} (2020), no.~3, 371--392. \MR{4067297}

\bibitem{durrett2019probability}
Rick Durrett, \emph{Probability---theory and examples}, fifth ed., Cambridge
  Series in Statistical and Probabilistic Mathematics, vol.~49, Cambridge
  University Press, Cambridge, 2019. \MR{3930614}

\bibitem{fang2019multivariate}
Xiao Fang, Qi-Man Shao, and Lihu Xu, \emph{Multivariate approximations in
  {W}asserstein distance by {S}tein's method and {B}ismut's formula}, Probab.
  Theory Related Fields \textbf{174} (2019), no.~3-4, 945--979. \MR{3980309}

\bibitem{MR2362589}
Brice Franke, \emph{A functional non-central limit theorem for jump-diffusions
  with periodic coefficients driven by stable {L}\'evy-noise}, J. Theoret.
  Probab. \textbf{20} (2007), no.~4, 1087--1100. \MR{2362589}

\bibitem{MR1949404}
Desmond~J. Higham, Xuerong Mao, and Andrew~M. Stuart, \emph{Strong convergence
  of {E}uler-type methods for nonlinear stochastic differential equations},
  SIAM J. Numer. Anal. \textbf{40} (2002), no.~3, 1041--1063. \MR{1949404}

\bibitem{MR2051587}
\bysame, \emph{Exponential mean-square stability of numerical solutions to
  stochastic differential equations}, LMS J. Comput. Math. \textbf{6} (2003),
  297--313. \MR{2051587}

\bibitem{hutzenthaler2011strong}
Martin Hutzenthaler, Arnulf Jentzen, and Peter~E. Kloeden, \emph{Strong and
  weak divergence in finite time of {E}uler's method for stochastic
  differential equations with non-globally {L}ipschitz continuous
  coefficients}, Proc. R. Soc. Lond. Ser. A Math. Phys. Eng. Sci. \textbf{467}
  (2011), no.~2130, 1563--1576. \MR{2795791}

\bibitem{MR3134726}
\bysame, \emph{Divergence of the multilevel {M}onte {C}arlo {E}uler method for
  nonlinear stochastic differential equations}, Ann. Appl. Probab. \textbf{23}
  (2013), no.~5, 1913--1966. \MR{3134726}

\bibitem{jin2024approximation}
Xinghu Jin, Guodong Pang, Lihu Xu, and Xin Xu, \emph{An approximation to the
  invariant measure of the limiting diffusion of {G/Ph/n + GI} queues in the
  {Halfin--Whitt} regime and related asymptotics}, Mathematics of Operations
  Research (2024).

\bibitem{KIM20142479}
Panki Kim and Renming Song, \emph{Stable process with singular drift},
  Stochastic Process. Appl. \textbf{124} (2014), no.~7, 2479--2516.
  \MR{3192504}

\bibitem{KUHN20192654}
Franziska K\"{u}hn and Ren\'{e}~L. Schilling, \emph{Strong convergence of the
  {E}uler-{M}aruyama approximation for a class of {L}\'{e}vy-driven {SDE}s},
  Stochastic Process. Appl. \textbf{129} (2019), no.~8, 2654--2680.
  \MR{3980140}

\bibitem{li2023stable}
Xiang Li, Lihu Xu, and Haoran Yang, \emph{Stable central limit theorem in total
  variation distance}, J. Theoret. Probab. \textbf{38} (2025), no.~1, Paper No.
  16, 51. \MR{4833281}

\bibitem{lu2022central}
Jianya Lu, Yuzhen Tan, and Lihu Xu, \emph{Central limit theorem and
  self-normalized {C}ram\'er-type moderate deviation for {E}uler-{M}aruyama
  scheme}, Bernoulli \textbf{28} (2022), no.~2, 937--964. \MR{4388925}

\bibitem{Luo2016RefinedBC}
Dejun Luo and Jian Wang, \emph{Refined basic couplings and {W}asserstein-type
  distances for {SDE}s with {L}{\'e}vy noises}, Stochastic Process. Appl.
  \textbf{129} (2019), no.~9, 3129--3173. \MR{3985558}

\bibitem{meyn1993stability}
Sean~P. Meyn and R.~L. Tweedie, \emph{Stability of {M}arkovian processes.
  {III}. {F}oster-{L}yapunov criteria for continuous-time processes}, Adv. in
  Appl. Probab. \textbf{25} (1993), no.~3, 518--548. \MR{1234295}

\bibitem{qiao2022limit}
Huijie Qiao, \emph{Limit theorems of {SDE}s driven by {L}\'evy processes and
  application to nonlinear filtering problems}, NoDEA Nonlinear Differential
  Equations Appl. \textbf{29} (2022), no.~1, Paper No. 8, 25. \MR{4363815}

\bibitem{MR3185174}
Ken-iti Sato, \emph{L\'evy processes and infinitely divisible distributions},
  revised ed., Cambridge Studies in Advanced Mathematics, vol.~68, Cambridge
  University Press, Cambridge, 2013, Translated from the 1990 Japanese
  original. \MR{3185174}

\bibitem{sethuraman2002martingale}
Sunder Sethuraman, \emph{A martingale central limit theorem}, PDF accessible on
  author’s website: https://www. math. arizona. edu/sethuram/notes/wi mart1.
  pdf \textbf{17} (2002).

\bibitem{MR2160585}
Rong Situ, \emph{Theory of stochastic differential equations with jumps and
  applications}, Mathematical and Analytical Techniques with Applications to
  Engineering, Springer, New York, 2005, Mathematical and analytical techniques
  with applications to engineering. \MR{2160585}

\bibitem{suo2021central}
Yongqiang Suo and Chenggui Yuan, \emph{Central limit theorem and moderate
  deviation principle for {M}c{K}ean-{V}lasov {SDE}s}, Acta Appl. Math.
  \textbf{175} (2021), Paper No. 16, 19. \MR{4327447}

\bibitem{wang2014harnack}
Feng-Yu Wang and Jian Wang, \emph{Harnack inequalities for stochastic equations
  driven by {L}\'evy noise}, J. Math. Anal. Appl. \textbf{410} (2014), no.~1,
  513--523. \MR{3109860}

\bibitem{wang2016p}
Jian Wang, \emph{{$L^p$}-{W}asserstein distance for stochastic differential
  equations driven by {L}\'evy processes}, Bernoulli \textbf{22} (2016), no.~3,
  1598--1616. \MR{3474827}

\bibitem{wang2021large}
Ran Wang, Jie Xiong, and Lihu Xu, \emph{Large deviation principle of occupation
  measures for non-linear monotone {SPDE}s}, Sci. China Math. \textbf{64}
  (2021), no.~4, 799--822. \MR{4236115}

\bibitem{wang2024phase}
Yu~Wang, Yimin Xiao, and Lihu Xu, \emph{Phase transition in the em scheme of an
  sde driven by $\alpha$-stable noises with $\alpha \in (0,2]$}, 2024.

\bibitem{xu2019approximation}
Lihu Xu, \emph{Approximation of stable law in {W}asserstein-1 distance by
  {S}tein's method}, Ann. Appl. Probab. \textbf{29} (2019), no.~1, 458--504.
  \MR{3910009}

\bibitem{zhang2013derivative}
Xicheng Zhang, \emph{Derivative formulas and gradient estimates for {SDE}s
  driven by {$\alpha$}-stable processes}, Stochastic Process. Appl.
  \textbf{123} (2013), no.~4, 1213--1228. \MR{3016221}

\end{thebibliography}

\end{document}